\newtheorem{theorem}{Theorem}[section]
\newtheorem{lemma}[theorem]{Lemma}
\newtheorem{prop}[theorem]{Proposition}
\newtheorem{cor}[theorem]{Corollary}
\newtheorem{conj}[theorem]{Conjecture}
\theoremstyle{definition}
\newtheorem{definition}[theorem]{Definition}
\newtheorem*{notation*}{Notation}
\theoremstyle{remark}
\newtheorem{remark}[theorem]{Remark}
\numberwithin{equation}{section}
\newcommand{\abs}[1]{\lvert#1\rvert}
\title{cable knots do not admit cosmetic surgeries}
\author{Ran Tao}
\address{Department of Mathematics, The Chinese University of Hong Kong, Shatin, NT, Hong Kong}
\email{rtao@math.cuhk.edu.hk}
\begin{document}

\begin{abstract}
Two Dehn surgeries on a knot are called purely cosmetic if their surgered manifolds are homeomorphic as oriented manifolds. Gordon conjectured  that non-trivial knots in $S^3$ do not admit purely cosmetic surgeries. In this article, we confirm this conjecture for cable knots.

\end{abstract}

\maketitle

\section{introduction}

Let $K$ be a knot in a $3$-manifold $Y$, and let $r,s$ be two different numbers  in $ \mathbb{Q}\cup\{\infty\}$. By doing the $r$-slope Dehn surgery along $K$, we obtain a manifold $Y_r(K)$.   If $Y_r(K) \cong Y_{s}(K)$ as oriented manifolds, then the surgeries $r$ and $s$ are called \emph{purely cosmetic}; If $Y_r(K) \cong -Y_{s}(K)$ as oriented manifolds, then $r$ and $s$ are called \emph{chirally cosmetic}. Here, the manifold $-Y_{s}(K)$ denotes an oppositely-oriented copy of $Y_s(K)$.

There are many examples for chirally cosmetic surgeries of knots in $S^3$.  In fact, for each amphichiral knot $K$ and slope $r$, we have $S^3_{r}(K) \cong - S^3_{-r}(K)$. On the other hand, the right-handed trefoil provides a chiral knot example of chirally cosmetic surgeries.(See \cite{mathieu1992closed}.) However, no purely cosmetic surgeries are known until now. Actually, we have the following conjecture for purely cosmetic surgeries.

\begin{conj}[Cosmetic surgery conjecture\cite{gordon1990dehn},\cite{kirby1995problems}]

Suppose K is a knot in a closed oriented $3$-manifold $Y$ such that $Y\backslash K$ is irreducible and not homeomorphic to the solid torus. If two different Dehn surgeries on $K$ are purely cosmetic, then there is a homeomorphism of $Y\backslash K$ which takes one slope to the other.
\end{conj}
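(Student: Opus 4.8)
The plan is to prove the statement through the geometric decomposition of the knot exterior $M = Y \setminus N(K)$, converting a purely cosmetic pair into a self-homeomorphism of $M$ that transports one slope to the other. A purely cosmetic pair is precisely an orientation-preserving homeomorphism $\phi \colon Y_r(K) \to Y_s(K)$. Writing $\gamma_r, \gamma_s$ for the cores of the two surgery solid tori, the goal reduces to arranging $\phi$ so that $\phi(\gamma_r) = \gamma_s$: once the cores correspond, deleting their regular neighborhoods yields a homeomorphism $M \to M$, and its restriction to $\partial M = \partial N(K)$ carries the filling slope $r$ to the filling slope $s$, which is exactly the conclusion. Before any geometry, a first reduction comes from homology: $\phi$ induces an isomorphism $H_1(Y_r(K)) \cong H_1(Y_s(K))$ respecting the linking forms, which constrains the admissible slope pairs $(r,s)$ --- over $Y = S^3$ this is the mechanism forcing $s = -r$, and in general it restricts $(r,s)$ through the Turaev torsion and the linking form. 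Since $M$ is irreducible with incompressible torus boundary (as $Y \setminus K$ is irreducible and not a solid torus), geometrization furnishes a canonical JSJ decomposition of $M$ into Seifert-fibered and hyperbolic pieces, and I would treat the resulting regimes in turn.

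First I would handle the case where $M$ is Seifert fibered or contains an essential torus. Here the JSJ tori and the Seifert structures are canonical and are preserved up to isotopy by $\phi$, and the surgery solid tori attach along $\partial M$ compatibly with these structures. In the Seifert subcase one exploits the classification of Dehn fillings of Seifert pieces together with the behaviour of the Seifert invariants and the rational Euler number under filling; an orientation-preserving homeomorphism must match these numerical data, which forces the two fillings to be related by a fibre-preserving homeomorphism and determines the slope correspondence. This regime already contains the cable case resolved in this article, namely the instance where $M$ splits along the cabling torus into a Seifert-fibered piece and the exterior of the companion knot; the analysis there is the template for the general toroidal argument.

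The main obstacle is the hyperbolic case, in which a JSJ piece of $M$ meeting $\partial M$ is hyperbolic. By Thurston's hyperbolic Dehn surgery theorem all but finitely many fillings are hyperbolic, and Mostow--Prasad rigidity then upgrades $\phi$ to an isometry of the filled hyperbolic manifolds. The difficulty is that this isometry need not send the core geodesic $\gamma_r$ to $\gamma_s$: the surgery cores are not in general the shortest geodesics, nor are they otherwise canonically marked, so the isometry may fail to restrict to a map of the complements --- exactly the point at which a naive rigidity argument collapses. To break this symmetry I would marshal the full package of invariants forced to agree under a purely cosmetic homeomorphism --- the hyperbolic volume, the Chern--Simons invariant, the Reidemeister--Turaev torsion, the Casson--Walker invariant $\lambda$, and the Heegaard Floer correction terms $d$ --- and extract numerical relations between $r$ and $s$ rigid enough to conclude that only slopes equivalent under a symmetry of $M$ can occur. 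The remaining finitely many non-hyperbolic exceptional fillings would then be cleared by the same homological and Floer-theoretic constraints on a case-by-case basis.

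I expect the hyperbolic regime to be where a complete argument in full generality remains out of reach with present tools, since detecting the surgery cores inside an arbitrary hyperbolic filling is genuinely delicate; accordingly the concrete contribution here is to settle the statement in the toroidal Seifert regime, that is for cable knots, where the cabling torus supplies precisely the canonical structure that the hyperbolic case lacks.
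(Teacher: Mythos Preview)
The statement you are attempting to prove is a \emph{conjecture}: the paper states it as Conjecture~1.1 and does not prove it. There is therefore no ``paper's own proof'' to compare your proposal against. What the paper actually proves is the special case $Y=S^3$ with $K$ a cable knot (Theorem~\ref{main}), and it does so not by the geometrization/rigidity program you sketch but by Gordon's explicit classification of surgeries on cable knots (Theorem~\ref{classification}) combined with a direct JSJ-piece count.

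Your proposal is not a proof but a strategy outline, and you yourself concede this in the final paragraph: the hyperbolic regime is where the argument collapses, because there is no known mechanism to force an isometry $\phi\colon Y_r(K)\to Y_s(K)$ to carry the surgery core $\gamma_r$ to $\gamma_s$. Listing invariants that must agree (volume, Chern--Simons, torsion, Casson--Walker, $d$-invariants) does not constitute a proof; one would need to show these invariants \emph{determine} the slope up to a symmetry of $M$, and no such result is known. The homological reduction you mention at the start is also overstated: over a general $Y$ the linking form and torsion do not force $s=-r$ or anything comparably rigid. In short, you have correctly identified why the conjecture is hard, but what you have written is a research program, not a proof, and the paper makes no claim to have carried it out.
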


This conjecture in $Y = S^3$ case is already interesting enough and has been studied by many authors.  The knot complement theorem by Gordon and Luecke \cite{gordon1989knots} implies that the $\infty$-surgery can not be purely cosmetic with any other surgery slope.  Boyer and Lines \cite{boyer1990surgery} proved that if a knot $K$ admits purely cosmetic surgeries, then the normalized Alexander polynomial $\Delta_K(t)$ of $K$ satisfies $\Delta_K''(1) = 0$. In addition, Ichihara and Wu \cite{ichihara2016note} proved that the Jones polynomial $V_K(t)$ of $K$ satisfies $V'''_K(1) = 0$ in this case. In \cite{2016arXiv160202371I}, Ichihara and Saito obtained some partial results for two-bridge knots using the $SL(2,\mathbb{C})$ Casson invariant. In \cite{ito2017lmo}, Ito gave some constraints using the LMO invariants.  Besides the above criteria, Heegaard-Floer homology is also a powerful tool in studying this conjecture. Using the Heegaard-Floer correction term, Wang proved  in \cite{wang2006cosmetic} that genus one knots do not admit purely cosmetic surgeries.
 In \cite{ozsvath2010knot}, Ozsv\'{a}th and Szab\'{o} proved that if $S^3_r(K) \cong S^3_s(K)$, then either  $S^3_r(K)$ is an $L$-space or $rs < 0$.  Wu further \cite{wu2011cosmetic} proved that the first case can not happen, thus we must have $rs<0.$ Later in \cite{ni2015cosmetic}, Ni and Wu proved the following theorem, which puts strong restrictions on purely cosmetic surgeries.

\begin{theorem}[\cite{ni2015cosmetic}]
\label{ni-wu}
 Suppose $K$ is a non-trivial knot in $S^3$. If $S^3_{r}(K) \cong S^3_{s}(K)$ with $r \neq s$ as oriented manifolds, then we have the following: 
 \begin{itemize}
 \item $r = - s$.
 \item $n^2 \equiv -1  \pmod  m$. Here $r = m/n$, and $m, n$ are coprime.
 \item $\tau(K) = 0$. Here $\tau$ is the concordance invariant defined by Ozsv\'{a}th-Szab\'o and Rasmussen. (See \cite{ozsvath2003knot} and \cite{MR2704683} )

 \end{itemize}
 \end{theorem}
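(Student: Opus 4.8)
The plan is to argue entirely in Heegaard Floer homology, playing the correction terms ($d$-invariants) of the two surgered manifolds against each other and using the Casson--Walker invariant as an auxiliary device. Write $r=m/n$ in lowest terms with $m>0$. By the knot complement theorem of Gordon--Luecke neither slope is $\infty$, and by the results of Ozsv\'ath--Szab\'o and Wu recalled above we already know $rs<0$ and that $S^3_r(K)$ is not an $L$-space. Since $|H_1(S^3_{p/q}(K))|=|p|$, the numerators of $r$ and $s$ have equal absolute value, so $rs<0$ forces $s=-m/\tilde n$ for some $\tilde n$ coprime to $m$; after adjusting signs take $n,\tilde n>0$. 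The conclusion $r=-s$ then amounts to $\tilde n=n$, and the remaining targets are the congruence $n^2\equiv-1\pmod m$ and $\tau(K)=0$.

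The two inputs I would use are Ozsv\'ath--Szab\'o's rational surgery formula for correction terms --- for $p,q>0$ and $0\le i\le p-1$,
$$d\bigl(S^3_{p/q}(K),i\bigr)=d\bigl(S^3_{p/q}(U),i\bigr)-2\max\{V_{\lfloor i/q\rfloor}(K),\ H_{\lfloor(i-p)/q\rfloor}(K)\},$$
where $S^3_{p/q}(U)=L(p,q)$ and the integers $V_j(K),H_j(K)\ge 0$ satisfy $H_{-j}=V_j$, with $V_j$ non-increasing, $H_j$ non-decreasing, and $V_j=0$ for $j\ge g(K)$ --- and the Casson--Walker surgery formula $\lambda\bigl(S^3_{p/q}(K)\bigr)=\lambda\bigl(L(p,q)\bigr)+\tfrac{q}{2p}\Delta_K''(1)$. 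For the negative surgery I would feed these through the orientation-reversal identity $S^3_{-p/q}(K)=-S^3_{p/q}(\overline K)$, so the $d$-invariants of $S^3_s(K)$ become $-d(L(m,\tilde n),\cdot)+2\max\{V_\bullet(\overline K),H_\bullet(\overline K)\}$ and its Casson--Walker invariant becomes $-\lambda(L(m,\tilde n))-\tfrac{\tilde n}{2m}\Delta_K''(1)$ (using $\Delta_{\overline K}=\Delta_K$).

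An orientation-preserving homeomorphism $S^3_r(K)\cong S^3_s(K)$ then supplies a bijection $\sigma$ of the $\mathrm{Spin}^c$ sets $\cong\mathbb Z/m$ preserving $d$-invariants, together with equality of Casson--Walker invariants. The Casson--Walker equation, combined with Boyer--Lines' vanishing $\Delta_K''(1)=0$ (which is itself this computation), gives $\lambda(L(m,n))=-\lambda(L(m,\tilde n))$, a first constraint on $\tilde n\bmod m$. On the $\mathrm{Spin}^c$ structures with $0\le i\le n-1$ the surgery formula collapses to $d\bigl(S^3_r(K),i\bigr)=d(L(m,n),i)-2V_0(K)$ (there $H_{\lfloor(i-m)/n\rfloor}$ involves only $V_j$ with $j\ge1$, which are $\le V_0$), and symmetrically $S^3_s(K)$ carries $\tilde n$ structures on which the invariant is $-d(L(m,\tilde n),\cdot)+2V_0(\overline K)$. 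Matching these finite lists of rationals, using the explicit recursive formula of Ozsv\'ath--Szab\'o for the $d$-invariants of lens spaces, should force $n=\tilde n$, force $L(m,-n)\cong L(m,n)$ and hence $n^2\equiv-1\pmod m$, and force $V_0(K)=V_0(\overline K)=0$. Finally $V_0(K)=0$ gives $\nu^+(K)=0$, hence $\tau(K)\le\nu^+(K)=0$, and applying this to $\overline K$ gives $-\tau(K)=\tau(\overline K)\le0$; thus $\tau(K)=0$.

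The technical heart --- and the step I expect to be hardest --- is the list-matching above: one must show that subtracting the non-negative, monotone profiles $2\max\{V_\bullet(K),H_\bullet(K)\}$ and $2\max\{V_\bullet(\overline K),H_\bullet(\overline K)\}$ from the two lens-space $d$-invariant lists cannot reconcile them unless the lens spaces are orientation-reversingly homeomorphic and the relevant local invariants of $K$ and $\overline K$ vanish. This demands a careful analysis of how $\lfloor i/q\rfloor$ behaves as $i$ runs over $\mathbb Z/m$, together with the monotonicity and conjugation symmetry of the $V_j,H_j$ and the Dedekind-sum type behavior of $d(L(p,q),i)$; in particular one has to exclude the near-miss $\tilde n\equiv -n\pmod m$, which would only yield $r\ne-s$, and pin down exactly the congruence $n^2\equiv-1\pmod m$. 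That is where essentially all the work lies.
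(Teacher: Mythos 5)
First, a structural point: the paper does not prove this statement at all --- it is imported verbatim from Ni--Wu \cite{ni2015cosmetic} and used as a black box, so there is no ``paper's own proof'' to compare against. Measured against the actual proof in \cite{ni2015cosmetic}, your proposal assembles the right toolbox (numerators match via $|H_1|$, $rs<0$ via Ozsv\'ath--Szab\'o and Wu, the rational surgery formula for correction terms, the Casson--Walker/Boyer--Lines identity, the mirror trick $S^3_{-p/q}(K)=-S^3_{p/q}(\overline K)$, and $V_0=0\Rightarrow\nu^+=0\Rightarrow\tau\le 0$ applied to both $K$ and $\overline K$). The endgame for $\tau(K)=0$ is exactly right once $V_0(K)=V_0(\overline K)=0$ is in hand.

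The genuine gap is the one you name yourself: everything that actually distinguishes the theorem from its hypotheses --- $\tilde n=n$, the congruence $n^2\equiv-1\pmod m$, and the vanishing $V_0(K)=V_0(\overline K)=0$ --- is deferred to a ``list-matching'' of $d$-invariants that you only assert ``should'' work. As written this is a plan, not a proof, and the plan is harder than necessary at its weakest point. In Ni--Wu the equality of denominators does \emph{not} come from matching $d$-invariant lists: it comes from the rank formula $\operatorname{rk}\widehat{HF}(S^3_{m/n}(K))=m+2n\,\nu(K)$ for $n>0$, where $\nu(K)\ge 0$ depends only on $K$ (and agrees for $K$ and $\overline K$); since Wu's theorem rules out the $L$-space case $\nu(K)=0$, equality of ranks forces $n=\tilde n$ immediately. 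Trying to extract $n=\tilde n$ from the $d$-invariant profiles alone, with an unknown $\mathrm{Spin}^c$ bijection $\sigma$, is substantially more delicate and you give no mechanism for controlling $\sigma$ (the linking form pins it down only up to the unit group of $\mathbb Z/m$). Likewise, the congruence and the vanishing of $V_0$ are extracted in \cite{ni2015cosmetic} by summing the $d$-invariant identity over all $\mathrm{Spin}^c$ structures (which kills the dependence on $\sigma$) and playing the resulting Dedekind-sum identity against the Casson--Walker equation and the non-negativity of the $V_i,H_i$ --- a specific computation, not a generic monotonicity argument. Until those steps are executed, the proposal establishes none of the three bulleted conclusions.
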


In this article, we study the purely cosmetic surgeries of \emph{cable knots}. A \emph{cable knot} $C_{p,q}(K)$ is the image of the torus knot $T_{q,p}$ lying on the boundary torus of a tubular neighborhood of $K$. We also regard $C_{p,q}(K)$ as a curve lying inside the tubular neighborhood by pushing the torus knot  $T_{q,p}$ into this solid torus. This curve winds $q$ times along the longitudinal direction of $K$ and $p$ times along the meridional direction. The main result is the following theorem. 

\begin{theorem}
\label{main}
Let $C_{p,q}(K)$ be a cable knot with winding number $\abs{q} \ge 2$. Suppose there exists an  orientation-preserving homeomorphism $f: S^3_{r}(C_{p,q}(K)) \to S^3_{s}(C_{p,q}(K)),$ then $r = s$.

\end{theorem}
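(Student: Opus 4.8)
The plan is to combine the structural restrictions from Theorem~\ref{ni-wu} with the well-developed theory of Heegaard Floer homology for surgeries on cable knots. Suppose, for contradiction, that $r\neq s$ and there is an orientation-preserving homeomorphism $f:S^3_r(C_{p,q}(K))\to S^3_s(C_{p,q}(K))$. By Theorem~\ref{ni-wu} we immediately get $s=-r$, and writing $r=m/n$ in lowest terms we have $n^2\equiv -1\pmod m$ and, crucially, $\tau(C_{p,q}(K))=0$. The first step is to exploit this $\tau$ constraint: there is a well-known cabling formula (Hedden, Hom, Van~Cott) expressing $\tau(C_{p,q}(K))$ in terms of $\tau(K)$, $p$, and $q$; since $\abs q\ge 2$, this formula forces $\tau(K)$ to sit in a very restricted range, and in particular one should be able to pin down $\tau(C_{p,q}(K))=0$ to mean that $p/q$ lies in a specific interval relative to $2\tau(K)-1$ (essentially $p=2q\tau(K)-q\pm 1$ type constraints, or $p<0$ small). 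This already severely limits the cabling parameters.

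The second and main step is to analyze the surgered manifold directly. Surgery on a cable knot is governed by Gordon's classical result: $S^3_{m/n}(C_{p,q}(K))$ is either a connected sum or a Seifert-fibered space glued to a knot complement, and more precisely, when the surgery slope is not the cabling slope $pq$, we have $S^3_{m/n}(C_{p,q}(K)) = S^3_{m/(nq^2)}(K) \# \text{(lens space)}$ only in the degenerate case $m=nq^2 pq$; generically $S^3_{m/n}(C_{p,q}(K))$ is obtained by gluing the Seifert piece of the cable space to $S^3 \setminus K$ along a torus, and its Heegaard Floer homology is computed (by work of Hom, Hedden, and others, building on the mapping cone formula) from $CFK^\infty(K)$ together with the cable pattern. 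The idea is that an orientation-preserving homeomorphism between $S^3_r$ and $S^3_{-r}$ of such a manifold must respect the JSJ decomposition, hence restrict to a homeomorphism of the Seifert pieces and of the $S^3\setminus K$ pieces, and match up the gluing slopes; tracking orientations through this decomposition should produce a contradiction with $r\neq s$ much as in the knot-complement approach, unless the knot complement piece itself admits an orientation-reversing symmetry swapping the relevant slopes — which is exactly what the Cosmetic Surgery Conjecture predicts cannot happen and which one rules out here using the numerical constraint $n^2\equiv -1\pmod m$ together with $r=-s$.

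The technical heart — and the step I expect to be the main obstacle — is handling the case where the surgery does \emph{not} split off a Seifert piece, i.e. when $m/n$ is close to the cabling slope $pq$ so that $S^3_{m/n}(C_{p,q}(K))$ is itself small Seifert-fibered or a surgery on $K$ with multiplicity $q^2$. In that range one cannot appeal to JSJ rigidity, and instead one must run a Heegaard Floer argument: compute the correction terms $d(S^3_{m/n}(C_{p,q}(K)),\mathfrak s)$ using the mapping cone formula for cables, compare them with $d(S^3_{-m/n}(C_{p,q}(K)),\mathfrak s')$ under the $\mathrm{Spin}^c$-conjugation forced by $f$, and show that the resulting system of equalities has no solution with $n^2\equiv-1\pmod m$ and the cabling constraints from Step~1. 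Here one needs the ``cable knots are Floer simple enough'' input: the relevant part of $CFK^\infty(C_{p,q}(K))$ is determined by $V_0(K), V_1(K),\dots$, so the $d$-invariants become explicit piecewise-linear functions of $n$, $m$, $q$, $\tau(K)$, and one reduces to a finite arithmetic check. Closing that arithmetic, and making sure the JSJ case and the Seifert/small-surgery case together cover all cabling parameters allowed by $\abs q\ge2$, is where the real work lies; the rest is assembling known formulas.
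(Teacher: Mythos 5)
You have the right starting points (Theorem \ref{ni-wu} to reduce to $s=-r$, and Gordon's classification of surgeries on cable knots), but the proposal contains two genuine gaps and one circularity. In the generic case, where both $S^3_{r}(C_{p,q}(K))$ and $S^3_{-r}(C_{p,q}(K))$ decompose as $E(K)\cup_{T}SFS_{\pm r}$, you assert that a homeomorphism must respect the JSJ decomposition and that ``tracking orientations should produce a contradiction \dots unless the knot complement admits an orientation-reversing symmetry \dots which is exactly what the Cosmetic Surgery Conjecture predicts cannot happen.'' As written this is circular: you never exhibit an invariant that actually distinguishes the two manifolds. The distinguishing invariant is concrete and elementary: $SFS_{r}$ has a singular fiber of multiplicity $\abs{npq-m}$ while $SFS_{-r}$ has one of multiplicity $\abs{npq+m}$, and these are never equal since $m,n,p,q$ are all non-zero; hence the two manifolds have non-homeomorphic JSJ pieces (after verifying that the gluing torus really is JSJ, which amounts to comparing regular-fiber slopes on $\partial N(K)$ against the adjacent piece of $E(K)$). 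Neither $n^2\equiv-1\pmod m$ nor $\tau(C_{p,q}(K))=0$ is needed anywhere in the argument.

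Your declared ``technical heart'' --- the slopes with $m=npq\pm1$, where $S^3_{r}(C_{p,q}(K))=S^3_{r/q^2}(K)$ --- is where you abandon JSJ and propose an unexecuted $d$-invariant computation (a ``finite arithmetic check'' that you do not close). This is both a gap and a wrong turn, because JSJ does not fail there: $m=npq\pm1$ and $-m=npq\pm1$ cannot hold simultaneously (that would force $npq=\pm1$, contradicting $\abs{q}\ge2$), so the mirror slope $-r$ falls into the generic case and $S^3_{-r}(C_{p,q}(K))$ has exactly one more JSJ torus than $S^3_{r}(C_{p,q}(K))$; one only needs to check that filling the outermost JSJ piece of $E(K)$ at slope $m/nq^2$ creates no new essential torus, which follows from $\abs{nq^2}>2$ and the Gordon--Luecke bounds on toroidal fillings. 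Two smaller issues: your statement of Gordon's classification is garbled (the reducible case occurs exactly at $r=pq$ and yields $S^3_{p/q}(K)\# L(q,p)$, not a connected sum involving $S^3_{m/nq^2}(K)$), and you never treat $K$ equal to the unknot, where $C_{p,q}(K)$ is a torus knot, $E(K)$ is a solid torus, and the JSJ argument must be replaced by something like $\Delta''_{T_{p,q}}(1)\neq0$. The $\tau$-cabling-formula step in your first paragraph restricts the parameters but is never used afterwards and can be dropped.
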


Note that we only need to compare $S^3_{r}(C_{p,q}(K))$ and $S^3_{-r}(C_{p,q}(K))$ by Theorem \ref{ni-wu}. Furthermore, we can reduce this problem to the case $p = \pm 1$. The idea of this reduction is that the concordance invariant $\tau(C_{p,q}(K))$  vanishes if and only if $p = \pm 1$. This observation is based on the formulae for $\tau(C_{p,q}(K))$ computed by Jennifer Hom in \cite{hom2013bordered}. (The notation for $C_{p,q}(K)$ in \cite{hom2013bordered} is different from ours.) We do not need this reduction in the proof of Theorem \ref{main}. \\

 This result is new in the sense that Theorem \ref{main} does not follow  from the criteria mentioned above. In fact, we have the following: 

 \begin{prop}
 If $K$ is a non-trivial knot such that $\Delta''_K(1) = 0$ and $V'''_K(1) = 0$, then we have $\Delta''_{C_{\pm 1,q}(K)}(1) = 0$ and $V'''_{C_{\pm 1,q}(K)}(1) = 0$.
 \end{prop}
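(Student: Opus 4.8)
The plan is to reduce everything to known satellite formulas for the Alexander and Jones polynomials of a cable and then to differentiate. Recall that for the cable knot $C_{p,q}(K)$ the Alexander polynomial satisfies the classical satellite formula
\[
\Delta_{C_{p,q}(K)}(t) = \Delta_{T_{p,q}}(t)\cdot \Delta_K(t^q),
\]
where $T_{p,q}$ is the $(p,q)$-torus knot (with the convention matching ours). When $p = \pm 1$ the companion torus knot is unknotted, so $\Delta_{T_{\pm 1,q}}(t) = 1$ and the formula collapses to $\Delta_{C_{\pm 1,q}(K)}(t) = \Delta_K(t^q)$. Writing $g(t) := \Delta_K(t^q)$ and differentiating twice, one gets $g''(1) = q^2\,\Delta_K''(1) + q(q-1)\,\Delta_K'(1)$ after using the chain rule; since $\Delta_K$ is normalized we have $\Delta_K(1) = 1$ and $\Delta_K'(1) = 0$ (the symmetry $\Delta_K(t) = \Delta_K(t^{-1})$ forces the first derivative at $1$ to vanish). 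Hence $g''(1) = q^2\,\Delta_K''(1) = 0$ whenever $\Delta_K''(1) = 0$. This disposes of the Alexander part.

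For the Jones polynomial I would use the cabling formula for the Kauffman bracket / colored Jones polynomials: the Jones polynomial of a satellite with pattern $P$ and companion $K$ is obtained from $K$ by decorating it with the element of the skein module of the solid torus determined by $P$. For $C_{\pm 1,q}(K)$ the pattern is the $q$-cable of the core with a framing correction, so $V_{C_{\pm 1,q}(K)}(t)$ is a fixed Laurent-polynomial combination of the colored Jones polynomials $J_{K,j}(t)$ for $j \le q$, each of which is itself $V$ of some cable evaluated via the Adams/Morton cabling formula — concretely there is an explicit expansion
\[
V_{C_{\pm 1,q}(K)}(t) = t^{a}\sum_{j} c_j(t)\, J_{K,j}(t)
\]
with the $c_j$ and the power $a$ depending only on $p=\pm 1$ and $q$. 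The key algebraic fact I need is the behavior at $t = 1$: each colored Jones polynomial satisfies $J_{K,j}(1) = 1$, $J_{K,j}'(1) = 0$, and the normalization of $J_{K,j}$ is such that $J_{K,j}''(1)$ and $J_{K,j}'''(1)$ are explicit polynomials in $j$ times the Vassiliev invariants $v_2(K) = -\tfrac16 V_K''(1)$ and $v_3(K)$ (the degree-$3$ part of $V$), by the Melvin–Morton–type expansion of the colored Jones polynomial near $t=1$. Plugging this in, $V_{C_{\pm 1,q}(K)}'''(1)$ becomes a universal linear combination of $v_2(K)$ and $v_3(K)$ (and $V_K''(1)$, $V_K'''(1)$), with coefficients depending only on $q$.

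The crux is then to pin down that universal combination. I would do this by a short computation in two ways: either (i) compute $V_{C_{\pm 1,q}(K)}'''(1)$ symbolically from the cabling formula, or — cleaner — (ii) invoke the known fact that $V'''(1)$ is, up to normalization, the primitive Vassiliev invariant $v_3$, together with the satellite formula for $v_3$ under cabling. There is a classical formula expressing $v_2$ and $v_3$ of a satellite in terms of those of the pattern and companion and the winding number; for the $q$-cable $C_{\pm 1,q}(K)$ it should read, schematically, $v_2(C_{\pm 1,q}(K)) = q^2 v_2(K) + (\text{term from the pattern, } = v_2(T_{\pm1,q}) = 0)$ and $v_3(C_{\pm 1,q}(K)) = q^3 v_3(K) + (\text{lower-order in the } v_2's, \text{ also } = 0)$. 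Since $v_2(K) = -\tfrac{1}{6}V_K''(1)$ and $V_K''(1)$ is determined by $\Delta_K''(1)$ via $V_K''(1) = -3\Delta_K''(1)$ (a standard identity valid for any knot), the hypothesis $\Delta_K''(1) = 0$ gives $v_2(K) = 0$, and $V_K'''(1) = 0$ gives $v_3(K) = 0$ (after subtracting the $v_2$-contribution to $V'''(1)$, which also vanishes). Feeding $v_2(K) = v_3(K) = 0$ into the satellite formulas yields $v_2(C_{\pm 1,q}(K)) = 0$ and $v_3(C_{\pm 1,q}(K)) = 0$, hence $\Delta_{C_{\pm 1,q}(K)}''(1) = 0$ and $V_{C_{\pm 1,q}(K)}'''(1) = 0$, which is the claim.

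I expect the main obstacle to be getting the cabling/satellite formula for $v_3$ (equivalently for $V'''(1)$) exactly right — tracking framing corrections and sign conventions in the skein-theoretic cabling, and making sure the pattern contributions $v_2(T_{\pm1,q})$ and $v_3(T_{\pm1,q})$ really do vanish for the unknotted companion $T_{\pm 1,q}$. One safe route around the bookkeeping is to prove the needed identity by specialization: both $V_{C_{\pm1,q}(K)}'''(1)$ and the candidate expression $q^3 V_K'''(1) + (\text{correction in } V_K''(1))$ are finite-type invariants of $K$ of degree $\le 3$, so it suffices to check the linear relation between them on a basis of such invariants — e.g. on the unknot, the trefoil, and the figure-eight — which reduces the whole Jones-polynomial step to a handful of explicit evaluations rather than a general skein manipulation.
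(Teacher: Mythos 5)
Your proposal is correct in substance, but it reaches the conclusion by a partly different and considerably heavier route than the paper. For the Alexander part you use the explicit satellite formula $\Delta_{C_{p,q}(K)}(t)=\Delta_{T_{p,q}}(t)\,\Delta_K(t^q)$, which for $p=\pm1$ collapses to $\Delta_K(t^q)$; together with $\Delta_K'(1)=0$ this gives the sharp identity $\Delta''_{C_{\pm1,q}(K)}(1)=q^2\,\Delta''_K(1)$ --- a clean, self-contained argument that is more explicit than what the paper does (it pins down the coefficient $a=q^2$, which the paper never needs). For the Jones part, your primary route through the skein-theoretic cabling formula for colored Jones polynomials and a Melvin--Morton-type expansion is plausible but left as a sketch, and as you anticipate the framing and normalization bookkeeping there is genuinely delicate; what actually carries the proof is your fallback, which is essentially the paper's argument: the composite $K\mapsto V'''_{C_{\pm1,q}(K)}(1)$ is a finite type invariant of order $\le 3$ (the cabling property, Proposition 9.9 of \cite{chmutov2012introduction}), the space of such invariants is spanned by $1$, $\Delta''_K(1)$, $V'''_K(1)$, and evaluating on the unknot (whose $(\pm1,q)$-cable is again the unknot) kills the constant term. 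The paper runs this one abstract argument uniformly for both $\Delta''$ and $V'''$, and note that it is even lighter than your fallback: you propose to determine the full linear relation by evaluating on three knots, but for the statement at hand you never need the coefficients $c$ and $d$ --- only the vanishing of the constant term matters, and the single unknot evaluation supplies that. So your plan works, with the Alexander half giving a nice quantitative refinement, at the cost of a Jones half that either requires substantial skein-theoretic computation or, once you invoke the finite-type fallback, reduces to the paper's proof with an unnecessary extra verification step.
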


 \begin{proof}

We need some basic facts of finite type invariants of knots.  The set $U_k$ of all finite type invariants of order $\leq n$ can be regarded as a vector space. When $n = 0$ or $1$, $U_n$ consists exactly of the constant fuctions. We also have $\dim(U_2) = 2$ and $\dim(U_3) = 3.$ 

Recall that $\Delta''_K(1)$ is a finite type invariant of order $2$ and $V'''_K(1)$ is of order $3$. Then each finite type invariant of order $2$ or $3$ is a linear combination of the constant function $1$, $\Delta''_K(1)$, and $V'''_K(1)$. (See \cite{bar1995vassiliev}; see also \cite{ichihara2016note}.)  Also recall that finite type invariants have a cabling property, which states that if $u_n$ is a finite type invariant of order $n$, then $u_n(C_{p,q}(K))$ is also a finite type invariant of order $n$. (See Proposition 9.9 of \cite{chmutov2012introduction}.)

In our case, we have $\Delta''_{C_{\pm 1,q}(K)}(1) = a\cdot \Delta''_{K}(1) + b$ and $V'''_{C_{\pm 1,q}(K)}(1) = c\cdot V'''_{K}(1) + d \cdot \Delta''_{K}(1) + e $, since $\Delta''_{C_{\pm 1,q}(K)}(1)$ and $V'''_{C_{\pm 1,q}(K)}(1)$ are finite type invariants of order $2$ and $3$ respectively.  Suppose $K$ is the unknot, then $C_{\pm 1,q}(K)$ is also the unknot. Since all the finite type invariants of the unknot is zero, we have $b = e = 0$. Hence if $\Delta''_{K}(1) = 0$, then $\Delta''_{C_{\pm 1,q}(K)}(1) = 0$; if we also have $V'''_{K}(1) = 0$, then $V'''_{C_{\pm 1,q}(K)}(1) = 0.$ 
 \end{proof}

Our proof relies on Gordon's classification of Dehn surgeries on cable knots.

\begin{theorem}[Corollary 7.3 of \cite{gordon1983dehn}]
\label{classification}
Suppose $q \ge 2$. The surgered manifold $S^3_{r}(C_{p,q}(K))$ of a cable knot $C_{p,q}(K)$ with slope $r = m/n$ is classified as follows.
\[
S^3_{r}(C_{p,q}(K)) = 
\begin{cases}
 S^3_{p/q}(K) \# L(q,p), &\text{if}  ~r = pq,\\
 S^3_{r/q^2}(K),  &\text{if} ~ m = npq \pm 1,\\
 ( S^3 \backslash N(K) ) \cup_{T} SFS_r,  &otherwise.\\  
\end{cases}
\]
Here, the notation $SFS_r$ denotes a Seifert fibered space with two singular fibers of multiplicities $\abs{q} $ and $\abs{npq - m}$. The boundary torus of $SFS_r$ is incompressible.

\end{theorem}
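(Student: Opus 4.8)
The plan is to analyze the Dehn filling of the cable space directly, rather than to quote the answer. First I would decompose the cable knot exterior as
$E(C_{p,q}(K)) = E(K)\cup_{T} CS$, where $E(K) = S^3\setminus \mathrm{int}\,N(K)$, $T = \partial N(K)$, and $CS = N(K)\setminus \mathrm{int}\,N(C_{p,q}(K))$ is the cable space. The key structural input is that $CS$ is Seifert fibered over an annulus with a single exceptional fiber, of multiplicity $\abs{q}$, namely the core of $N(K)$; the regular fibers are the parallel copies of $C_{p,q}(K)$ on the concentric tori. On the boundary torus $T' = \partial N(C_{p,q}(K))$ the regular fiber is the surface-framed longitude of the cable, so in the meridian--longitude basis $(\mu',\lambda')$ of $C_{p,q}(K)$ it has slope $pq$. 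Performing $r = m/n$ surgery amounts to Dehn filling $CS$ along $\gamma = m\mu' + n\lambda'$, and the trichotomy in the statement is governed entirely by the intersection number $\Delta(\gamma,\text{fiber}) = \abs{m - npq}$, which equals $0$ exactly when $r = pq$, equals $1$ exactly when $m = npq\pm 1$, and is $\ge 2$ otherwise.

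Next I would treat the two cases in which the Seifert structure survives the filling, $\Delta\ge 1$. Filling a Seifert fibered space along a boundary torus transverse to the fiber caps off the corresponding boundary circle of the base orbifold with a disk carrying a new exceptional fiber of multiplicity $\Delta$. When $\Delta = \abs{npq-m}\ge 2$, this produces a Seifert fibered space over the disk with two exceptional fibers, of multiplicities $\abs{q}$ and $\abs{npq-m}$; this is the piece $SFS_r$. Since both multiplicities are at least $2$ (using $q\ge 2$), such a space is not a solid torus, so its boundary $T$ is incompressible and $S^3_r(C_{p,q}(K)) = E(K)\cup_T SFS_r$. When $\Delta = 1$ the new exceptional fiber is regular, so the filled cable space is Seifert fibered over the disk with a single exceptional fiber of order $\abs{q}$, hence a solid torus; therefore $S^3_r(C_{p,q}(K))$ is a Dehn filling of $E(K)$, i.e. a surgery on $K$. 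The remaining point is to identify the slope, which I would obtain by a change-of-basis computation expressing the meridian of the filled solid torus on $T$ in the $(\mu,\lambda)$ basis of $K$; the winding number $q$ forces the framings to scale by $q^2$, yielding the coefficient $r/q^2$.

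The genuinely different case is $\Delta = 0$, i.e. $r = pq$, where the filling is along the fiber, the Seifert structure breaks, and the result is reducible. Here I would exhibit the reducing sphere explicitly. Since $C_{p,q}(K)$ may be isotoped onto $T = \partial N(K)$, the annulus $A = T\setminus \mathrm{int}\,N(C_{p,q}(K))$ is properly embedded with both boundary curves of surface slope $pq$ on $T'$; filling along $pq$ caps each boundary curve with a meridian disk of the surgery solid torus, turning $A$ into a $2$-sphere $S$. This sphere splits $S^3_{pq}(C_{p,q}(K))$ along $T$ into its two capped sides. The $N(K)$ side is the solid torus $N(K)$ Dehn filled along $p\mu+q\lambda$, which is the lens space $L(q,p)$, while the $E(K)$ side is $E(K)$ filled along $p\mu+q\lambda$, which is $S^3_{p/q}(K)$; hence $S^3_{pq}(C_{p,q}(K)) = S^3_{p/q}(K)\,\#\,L(q,p)$. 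The hypothesis $q\ge 2$ guarantees $L(q,p)\ne S^3$, so $S$ is essential and the connected sum is genuine.

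I expect the main obstacle to be the two exact identifications rather than the trichotomy itself. The split into $\Delta\in\{0,1,\ge 2\}$ is clean and essentially combinatorial, but pinning down that the fiber-slope filling gives precisely $L(q,p)$ (and not another lens space) and that the $\Delta = 1$ filling gives precisely the coefficient $r/q^2$ requires careful bookkeeping of the Seifert invariants of the cable space and of the change of basis between the framings of $C_{p,q}(K)$ and of $K$. Getting the orientations and the order of $p,q$ correct in $L(q,p)$ is the most error-prone step, and I would double-check it by specializing to the case where $K$ is the unknot, where $C_{p,q}(K)$ is the torus knot $T_{p,q}$ and the classical computation of $pq$-surgery on a torus knot supplies an independent check.
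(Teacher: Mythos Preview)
The paper does not prove this statement at all: it is quoted as Corollary~7.3 of \cite{gordon1983dehn} and used as a black box, so there is no in-paper proof to compare against. Your outline is the standard argument behind Gordon's corollary and is correct as a sketch; the paper even records one of your ingredients separately (Lemma~\ref{cable_space_fiber_slope}, that the regular fiber of the cable space has slope $pq$ on $\partial N(C_{p,q}(K))$).

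Two places where your write-up would need tightening to be a full proof. First, in the $\Delta=1$ case you assert the surgery coefficient on $K$ is $r/q^2$ by saying ``the winding number $q$ forces the framings to scale by $q^2$''; this is the right heuristic but you would need to actually exhibit the meridian of the filled solid torus on $T=\partial N(K)$ in the $(\mu,\lambda)$ basis, which amounts to writing down the change-of-basis matrix for the cable space and checking that $m\mu'+n\lambda'$ maps to $m\mu+nq^2\lambda$ (up to sign). Second, in the $\Delta=0$ case your description of the two sides of the reducing sphere is slightly loose: neither side is literally ``$N(K)$ Dehn filled along $p\mu+q\lambda$'', since the filling solid torus $V$ is split by the two meridian disks and each half of $V$ is a $3$-ball glued to one side of the annulus $A$. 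The correct statement is that each side of $S$ is (a solid torus or $E(K)$) with a collar of its boundary replaced according to the $(p,q)$ curve, and identifying these as $L(q,p)$ and $S^3_{p/q}(K)$ requires one more line of justification. Your proposed sanity check against torus-knot surgery is a good idea for both of these.
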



To prove Theorem \ref{main}, we simply compare $S^3_r(C_{p,q}(K))$ and $S^3_{-r}(C_{p,q}(K))$ case by case, as described in Theorem \ref{classification}. The outline is the following: if $S^3_r(C_{p,q}(K))$ appears in the first case, then $S^3_{-r}(C_{p,q}(K))$ must be irreducible; if $S^3_r(C_{p,q}(K))$ appears in the second case, then $S^3_{-r}(C_{p,q}(K))$ has one more JSJ-torus than that of $S^3_{r}(C_{p,q}(K))$; if $S^3_r(C_{p,q}(K))$ appears in the last case, then the JSJ-pieces of $S^3_r(C_{p,q}(K))$ and $S^3_{-r}(C_{p,q}(K))$ are different. In each case, we have $S^3_r(C_{p,q}(K))\not\cong S^3_{-r}(C_{p,q}(K)),$ and hence the theorem is proved.\\

\subsection*{Acknowledgements.} I wish to thank my advisor Zhongtao Wu for his help and guidance throughout my Ph.D. studies. He introduced me the cosmetic surgery conjecture and gave me many helpful suggestions on this problem. I also thank Kazuhiro Ichihara, Kimihiko Motegi and Jingling Yang for helpful comments and discussions. This work was partially supported by grants from the Research Grants Council of the Hong Kong Special Administrative Region, China (Project No. CUHK 14301215).


\section{The JSJ-decompositions of knot complements}

\begin{notation*}  The notation $M \cong M'$ means that there exists an orientation-preserving homeomorphism between oriented $3$-manifolds $M$ and $M'$. All the $3$-manifolds in this article are assumed to be compact and oriented. 

We use $N(K)$ to denote a tubular neighborhood of the knot $K$ in a $3$-manifold. For a knot $K$ in $S^3$,  we use $E(K)$ to denote $S^3 \backslash {N(K)}$. For each knot $K$ in $S^3$, a prefered longitude is chosen to be homologically trivial in $E(K)$. A $(p,q)$-curve or a $p/q$-slope on the boundary of $N(K)$ or $E(K)$ is a curve that winds $p$ times along the meridional direction and $q$ times along a prefered longitude. Note that we can also talk about the $(p,q)$-curves inside $N(K)$ once the prefered longitude is chosen.   We use $C_{p,q}(K)$ to denote the $(p,q)$-cable of $K$, with longitudinal winding number $\abs{q}\ge 1$. Since $C_{p,\pm 1} (K)$ is isotopic to $K,$ we require that $\abs{q} \ge 2$.  Given two slopes $r$ and $s$, we use $\Delta(r,s)$ to denote their minimal geometric intersection number on the torus.

Let $M$ be a $3$-manifold with a toroidal boundary component. Suppose we have chosen meridians and longitudes on this boundary torus. Then we use $M(r)$ to denote the resulting manifold of the $r$-slope Dehn filling on $M$ along this boundary.  

\end{notation*}

In this section, we collect some results related to JSJ-decomposition. First we recall the JSJ-decomposition Theorem, which is due to Jaco and Shalen \cite{MR520524}  and Johannson \cite{MR551744}. We use the version in Hatcher's notes \cite{hatcher2000notes}. 

\begin{theorem}[The JSJ-decomposition theorem]
Let $M$ be a compact irreducible orientable $3$-manifold. Then there exists a finite collection of disjoint incompressible tori $\{T_i\}$ in $M$ such that each component of $M\backslash \cup_i T_i$ is either atoroidal or Seifert fibered. Furthermore, a minimal choice of such a collection  is unique up to isotopy. 
\end{theorem}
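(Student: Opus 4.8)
The plan is to prove existence of the torus system first and then uniqueness of a minimal one, both resting on Kneser--Haken finiteness of incompressible surfaces and on the structure theory of incompressible surfaces in Seifert fibered spaces. Throughout, irreducibility of $M$ is used to keep the cut pieces irreducible and to eliminate sphere components of surface intersections via innermost-disk arguments.

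For existence, I would first invoke Kneser--Haken finiteness: there is a constant $c(M)$ so that any family of pairwise-disjoint, pairwise-non-parallel incompressible surfaces in $M$, none a sphere and none boundary-parallel, has at most $c(M)$ members. Hence among all such families of tori one can choose a maximal one $\mathcal{T}=\{T_1,\dots,T_n\}$. Cutting $M$ along $\mathcal{T}$ gives pieces $M_1,\dots,M_k$, each irreducible since $M$ is irreducible and the $T_i$ are incompressible. The crux is then to arrange that every $M_j$ is atoroidal or Seifert fibered: if some $M_j$ contains an essential torus $T$ (incompressible, not parallel to $\partial M_j$), one analyzes how $T$ can fail to enlarge the collection $\mathcal{T}$ inside $M$, and shows that the presence of sufficiently many disjoint essential tori in an irreducible piece forces it to carry a Seifert fibration; conceptually this is packaged in the characteristic submanifold $\Sigma\subset M$, the maximal Seifert-fibered submanifold with essential frontier satisfying the enclosing property, whose frontier tori are the desired decomposing tori. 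A handful of small Seifert fibered spaces (e.g.\ $T^2\times I$ and the twisted $I$-bundle over the Klein bottle) must be handled as exceptions, with "atoroidal'' interpreted so that they count as Seifert fibered.

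For uniqueness of a minimal system, I would take two minimal torus systems $\mathcal{T}$ and $\mathcal{T}'$, each yielding an atoroidal-or-Seifert decomposition, and isotope them into minimal position, so that the curves of $\mathcal{T}\cap\mathcal{T}'$ are essential in every torus involved and there are no bigons (possible by incompressibility and irreducibility). For each $T'\in\mathcal{T}'$, I would examine $T'\cap M_j$ for the pieces $M_j$ of $M\setminus\mathcal{T}$: in an atoroidal $M_j$ these pieces are incompressible annuli that can be pushed off the cutting tori, while in a Seifert fibered $M_j$ they can be isotoped to be vertical (Waldhausen's classification of essential surfaces in Seifert fibered spaces). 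Iterating removes all intersections, so $T'$ is isotoped into a single piece, and minimality on both sides forces $T'$ to be parallel to a component of $\mathcal{T}$; running over all of $\mathcal{T}'$ and symmetrizing in $\mathcal{T}\leftrightarrow\mathcal{T}'$ shows the two minimal systems are isotopic.

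I expect the main obstacle to be the "atoroidal or Seifert fibered'' dichotomy for the pieces, i.e.\ proving that a compact irreducible $3$-manifold containing enough disjoint essential tori must admit a Seifert fibration; this is the heart of Jaco--Shalen--Johannson characteristic submanifold theory, accessible via Waldhausen's analysis of graph manifolds together with the enclosing property. By contrast, the finiteness input and the uniqueness argument are comparatively formal once Kneser--Haken finiteness and the description of essential surfaces in Seifert fibered spaces are in hand.
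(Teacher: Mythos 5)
The paper offers no proof of this statement: it is quoted verbatim (in Hatcher's formulation) from the work of Jaco--Shalen and Johannson, so there is no in-paper argument to measure yours against. On its own terms, your outline follows the standard route (essentially the one in \cite{hatcher2000notes}): Kneser--Haken finiteness for existence, and Waldhausen's vertical/horizontal classification of essential surfaces in Seifert fibered pieces for uniqueness. That is the right skeleton, but two points need repair. First, the emphasis in your existence step is misplaced: once Haken finiteness produces a maximal collection of disjoint, pairwise non-parallel, non-boundary-parallel incompressible tori, the complementary pieces are already atoroidal --- an essential non-boundary-parallel torus in a piece would enlarge the collection, and the product region witnessing parallelism in $M$ cannot swallow a whole complementary piece, since no piece cut out by a maximal system is $T^2\times I$ (outside the torus-bundle exception). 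So existence requires no Seifert-fibered alternative at all. The characteristic submanifold and the ``atoroidal or Seifert fibered'' dichotomy are instead what make the \emph{minimal} collection well defined and unique, because maximal collections are highly non-unique: a closed Seifert fibered space over a higher-genus base contains many non-isotopic maximal torus systems, yet its JSJ system is empty.

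Second, in your uniqueness step the claim that ``minimality on both sides forces $T'$ to be parallel to a component of $\mathcal{T}$'' has a gap. After isotoping $T'\in\mathcal{T}'$ into a single piece $M_j$, if $M_j$ is Seifert fibered then $T'$ may be an essential vertical torus that is \emph{not} boundary-parallel. You must then argue that deleting $T'$ from $\mathcal{T}'$ still yields an admissible system --- cutting a Seifert piece along an essential vertical torus produces Seifert pieces whose fibrations match across $T'$ --- contradicting minimality of $\mathcal{T}'$. This is precisely the content of condition (2) in the paper's Proposition \ref{criterion} (regular fibers must not match across a genuine JSJ torus), and it is also where the exceptional small Seifert pieces such as $T^2\times I$ and the twisted $I$-bundle over the Klein bottle, which admit more than one fibration, must be treated separately, as condition (3) there reflects. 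With those repairs your sketch is a faithful outline of the known proof, though it remains an outline: the dichotomy and the enclosing property you defer to are exactly the substance of the theorem being cited.
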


\begin{definition}
We call the unique isotopy class of decomposition tori (or any representative) in the above theorem the \emph{JSJ-tori} of $M$. We call an embedded torus $T$ a \emph{JSJ-torus} if $T$ is isotopic to a torus in the collection of {JSJ-tori}. We also call the components resulting from decomposing $M$ along the {the JSJ-tori}  \emph{the JSJ-pieces} of $M$. We just call an object \emph{JSJ} for short in the above cases if there is no ambiguity.
\end{definition}

\begin{remark}

We can apply the JSJ-decomposition theorem to a manifold with incompressible toroidal boundary by considering its double. In particular, knot complements admit JSJ-decompositions. See \cite{budney2005jsj} for an explicit description of this JSJ-structure.

\end{remark}

Let $f: M \to M'$ be a homeomorphism of oriented $3$-manifolds. Then $f$ maps the JSJ-tori (or JSJ-pieces) of $M$ to the JSJ-tori (or JSJ-pieces) of $M'$ bijectively. In other words, $M$ and $M'$ have identical sets of JSJ-pieces. The following lemma is the key idea of the proof of Theorem \ref{main}.

\begin{lemma}
\label{diff_sfs_piece}
Let $N$, $F$, and $F'$ be compact irreducible $3$-manifolds with toroidal boundaries. Suppose $F$ and $F'$ are atoroidal or Seifert fibered. Let $M = N \cup_T F$ and $M' = N \cup_{T'} F'$ be manifolds obtained by gluing along the boundary tori. Suppose further that the gluing tori $T$ and $T'$ are JSJ in $M$ and $M'$. If $F \not\cong F'$, then $M \not\cong M'.$ 
\end{lemma}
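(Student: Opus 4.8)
The plan is to argue by contradiction: assuming there is an orientation-preserving homeomorphism $g\colon M\to M'$, I will manufacture an orientation-preserving homeomorphism $F\cong F'$, contradicting the hypothesis $F\not\cong F'$.

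First I would dispatch the standing technical hypotheses. Since $T$ is a JSJ torus of $M$ it is incompressible in $M$, hence incompressible in each of $N$ and $F$ (a compressing disk on either side is a compressing disk in $M$). In particular $F$ is not a solid torus, and since $F$ is irreducible, all of $\partial F$ is incompressible; the same holds for $N$ and $F'$, and by an innermost-circle argument along $T$ one checks that $M$ and $M'$ are themselves irreducible with incompressible boundary. Hence the JSJ decompositions of all the manifolds in sight are defined (for those with boundary, via the doubling described in the Remark). Throughout, fix an orientation of $N$, orient $F,F'$ compatibly along the gluings, and let $M,M'$ carry the resulting orientations, so that ``$\cong$'' below means orientation-preserving homeomorphism with respect to these orientations.

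The crux is the claim: if $\mathcal T_N$ denotes the JSJ tori of $N$, then $\{T\}\cup\mathcal T_N$ (after isotoping $\mathcal T_N$ off $T$) is the JSJ collection of $M$; consequently the multiset $\mathcal P(M)$ of oriented homeomorphism types of JSJ pieces of $M$ equals $\{[F]\}\uplus\mathcal P(N)$, where $\mathcal P(N)$ is the analogous multiset for $N$. Indeed, $\{T\}\cup\mathcal T_N$ is a system of disjoint incompressible tori cutting $M$ into $F$ together with the JSJ pieces of $N$, every one of which is atoroidal or Seifert fibered, so only minimality needs to be checked. Since $T$ is assumed to be a JSJ torus of $M$, it occurs up to isotopy in the minimal collection $\mathcal T_M$; and because $F$ is already atoroidal or Seifert fibered, no torus of $\mathcal T_M$ can be isotoped into $F$ without being redundant, so $\mathcal T_M\setminus\{T\}$ may be isotoped into $N$, where it is incompressible and cuts $N$ into atoroidal or Seifert pieces. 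If this family were not minimal for $N$, deleting a redundant torus would fuse two pieces of $N$ into a single atoroidal or Seifert piece; performing the same deletion in $M$ (the fused piece either misses $T$ or inherits $T$ on its boundary and stays glued to $F$ there) would still leave every complementary piece of $M$ atoroidal or Seifert fibered, contradicting minimality of $\mathcal T_M$. Hence $\mathcal T_M\setminus\{T\}$ is isotopic to $\mathcal T_N$, which proves the claim. The identical argument with $T'$ and $F'$ gives $\mathcal P(M')=\{[F']\}\uplus\mathcal P(N)$, with the very same multiset $\mathcal P(N)$ since $N$ is the common piece.

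To finish: $g$ sends $\mathcal T_M$ to a JSJ collection of $M'$, hence to a collection isotopic to $\mathcal T_{M'}$, and after adjusting by an ambient isotopy $g$ restricts to orientation-preserving homeomorphisms between the JSJ pieces; thus $\mathcal P(M)=\mathcal P(M')$ as multisets. Combining with the claim, $\{[F]\}\uplus\mathcal P(N)=\{[F']\}\uplus\mathcal P(N)$, and cancelling the common finite multiset $\mathcal P(N)$ forces $[F]=[F']$, i.e.\ $F\cong F'$, the desired contradiction; therefore $M\not\cong M'$. I expect the real obstacle to be precisely the minimality bookkeeping in the claim --- showing that $T$ remains essential and that no JSJ torus of $N$ becomes redundant in $M$ --- which is exactly where the hypothesis that $T$ is a JSJ torus is used, as it rules out degenerate situations (for instance a fiber-preserving gluing under which $N$ and $F$ would amalgamate into a single Seifert piece). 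Everything else is the uniqueness clause of the JSJ theorem together with the naturality of the decomposition under homeomorphisms.
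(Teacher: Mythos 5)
Your proposal is correct and follows essentially the same route as the paper: identify the JSJ pieces of $M$ and $M'$ as those of $N$ together with the extra piece $F$ (resp.\ $F'$), then use the homeomorphism-invariance of the JSJ decomposition to force $F\cong F'$, a contradiction. You in fact supply more detail than the paper on the minimality bookkeeping (the paper simply asserts that the JSJ pieces of $M$ are those of $N$ plus $F$), and your multiset cancellation replaces the paper's equivalent infinite-descent argument showing $N$ would otherwise contain infinitely many copies of $F$ and $F'$.
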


\begin{proof}
Suppose we have an orientation perserving map  $f: M \to M'$. Then $f$ induces a bijection between the two sets of JSJ-pieces, sending each piece to its homeomorphic image in the other set. Each set  consists of the JSJ-pieces of $N$ and an extra piece $F$ or $F'$.  Since $F \not\cong F'$, the two sets can not be equal since they are finite. Indeed, if $F$ is mapped into $N$, then $N$ contains a JSJ-piece which is homeomorphic to $F$. Similarly, $N$ should also contain a JSJ-piece which is homeomorphic to $F'$. By induction, we can deduce that $N$ contains infinitely many $F$ and $F'$, which is impossible.
\end{proof}

We need a criterion on whether certain tori are JSJ.

\begin{prop}[Proposition 1.6.2 of \cite{aschenbrenner3}]
\label{criterion}

Let $M$ be a compact irreducible orientable $3$-manifold with empty or toroidal boundary. Let $\{T_i\}$ be a collection of disjoint embedded incompressible tori in $M$. Then $\{T_i\}$ are the JSJ-tori of $M$ if and only if the following holds: 
\begin{enumerate}

\item each component $\{M_j\}$ of $M\backslash \cup_i T_i$ is atoroidal or Seifert fibered;

\item if  $~T_i$ cobounds Seifert fibered components $M_j$ and $M_k$ (with possibly $j = k$), then their regular fibers do not match; in other words, their Seifert fibered structures can not be glued together along $T_i$ to form a larger one;

\item if a component $M_i$ is homeomorphic to $T^2 \times I$, then $M$ is a torus bundle with only one JSJ-piece.
\end{enumerate}

\end{prop}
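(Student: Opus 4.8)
The plan is to derive this criterion from the existence-and-uniqueness part of the JSJ theorem quoted above, by recognizing conditions (1)--(3) as the statement that $\{T_i\}$ is a \emph{minimal} collection of incompressible tori cutting $M$ into atoroidal or Seifert fibered pieces. I would split the equivalence into a short ``no redundant torus'' direction and a longer comparison with the canonical JSJ-tori.

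For the forward direction, assume $\{T_i\}$ is the JSJ-collection. Then (1) holds by definition. For (2): if some $T_i$ were adjacent on both sides to Seifert fibered pieces whose regular fibers matched along $T_i$, the two Seifert structures would glue to a Seifert structure on the union, so deleting $T_i$ would yield a strictly smaller family of incompressible tori still realizing the decomposition, contradicting minimality. For (3): if a complementary piece is $T^2\times I$ but $M$ is not a torus bundle, I would use the product structure to isotope the whole torus family across this $T^2\times I$, absorbing it into the adjacent piece without changing that piece's homeomorphism type, and again obtain a smaller valid family; the only way this fails is when the two ends of the $T^2\times I$ piece are glued to each other, i.e.\ $M$ fibers over $S^1$ with torus fiber. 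A few sub-cases (an end of the product lying on $\partial M$, the adjacent piece being itself a product) must be checked, but the reduction is always available outside the torus-bundle case.

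For the converse, assume $\{T_i\}$ satisfies (1)--(3) and let $\{S_k\}$ be the genuine JSJ-tori, which satisfy (1)--(3) by the direction just proved. I would first put the two families in minimal transverse position: an innermost-disk argument using irreducibility of $M$ and incompressibility of the tori lets one take every intersection circle essential on every torus involved, and a standard reduction using boundary-parallel annuli then makes the families disjoint. Each $T_i$ now lies in a JSJ-piece $P$ and is incompressible there. If $P$ is atoroidal, $T_i$ is $\partial$-parallel in $P$, hence isotopic in $M$ to some $S_k$ or to a component of $\partial M$; the latter is killed by condition (3) except when $M$ is a torus bundle, a case one checks by hand. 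If $P$ is Seifert fibered, the classification of incompressible tori in Seifert fibered spaces shows $T_i$ is vertical or horizontal; a horizontal $T_i$ forces $P\cong T^2\times I$ and, via (3), forces $M$ to be a torus bundle, while a vertical $T_i$ essential in $P$ cuts $P$ into Seifert pieces with matching fibers along $T_i$, violating (2) for $\{T_i\}$ — so $T_i$ is in fact $\partial$-parallel in $P$ and isotopic to some $S_k$. Thus every $T_i$ is isotopic into $\{S_k\}$, by symmetry every $S_k$ is isotopic into $\{T_i\}$, and a counting argument identifies the two families up to isotopy.

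I expect the main obstacle to be the Seifert fibered bookkeeping in the converse: one must use the classification of incompressible surfaces in Seifert fibered spaces — that, outside a short list of small Seifert manifolds, every essential torus and every essential annulus is isotopic to a vertical one, and that an essential vertical torus always produces matching regular fibers on its two sides — and must carefully track the degenerate $T^2\times I$ and torus-bundle situations where the JSJ-collection behaves exceptionally. The innermost-disk and annulus reductions are routine but genuinely rely on irreducibility of $M$ and incompressibility of all the tori involved.
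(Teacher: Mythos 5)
The paper offers no proof of this proposition: it is quoted verbatim, with attribution, from Proposition~1.6.2 of the book of Aschenbrenner--Friedl--Wilton, so there is no in-paper argument to compare yours against. Judged on its own, your outline follows the standard route (forward direction from minimality, converse by isotoping your family off the canonical tori and analyzing each $T_i$ inside a JSJ-piece), and the forward direction and the atoroidal branch of the converse are essentially fine.

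There is, however, a genuine gap at the pivotal step of the converse, where you claim that a vertical essential torus $T_i$ in a Seifert fibered JSJ-piece $P$ ``cuts $P$ into Seifert pieces with matching fibers along $T_i$, violating (2).'' Condition (2) is a statement about the components of $M\backslash \cup_i T_i$ adjacent to $T_i$, and these are in general not the components of $P\backslash T_i$: they may contain other tori of the canonical family $\{S_k\}$ and extend across several JSJ-pieces, and the Seifert fibration they carry by hypothesis (1) is a priori unrelated to the fibration of $P$ restricted to a neighborhood of $T_i$. To make the step work you must (a) show that the complementary piece of $\{T_i\}$ adjacent to $T_i$ admits a Seifert fibration agreeing near $T_i$ with that of $P$, which requires the uniqueness (up to isotopy) of Seifert fibrations on the relevant pieces and a separate treatment of the exceptional manifolds where uniqueness fails ($T^2\times I$, the twisted $I$-bundle over the Klein bottle, etc.), and (b) handle the possibility that this adjacent piece is atoroidal rather than Seifert fibered, in which case (2) says nothing and you must instead derive a contradiction from the $S_k$ it swallows being boundary-parallel in it. You correctly flag ``Seifert fibered bookkeeping'' as the main obstacle, but as written the argument elides exactly the point that makes this criterion a proposition requiring proof rather than a restatement of minimality; since the paper simply imports the result, citing the source as it does is the appropriate resolution unless you intend to carry out this bookkeeping in full.
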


\begin{remark}
\label{gluing_torus_is_JSJ_remark}
We can use the above criterion to determine whether certain gluing tori are JSJ. Let $M = M_1 \cup_T M_2$ be a manifold obtained by gluing $M_1$ and $M_2$ along a toroidal boundary. Suppose $M_1$ does not have torus bundle JSJ-pieces and $M_2$ is Seifert fibered. Suppose further $T$ is incompressible in both $M_1$ and $M_2$. If the JSJ-piece bounded by $T$ in $M_1$ is Seifert fibered, and its regular fibers on $T$ are glued onto the regular fibers of $M_2$, then  $T$ is not a JSJ-torus of $M$. The above criterion tells us that this is the only case where $T$ is not JSJ.

\end{remark}

We need a description of the JSJ-decomposition of knot complements in $S^3$. The following theorem is due to Budney \cite{budney2005jsj}, and is based on previous works by Jaco and Shalen \cite{MR520524}, Johannson \cite{MR551744}, Bonahon and Siebenmann \cite{bonahonsiebenmann_unpublish}, Eisenbud and Neumann \cite{MR817982}, and Thurston \cite{MR648524}. We use the version reformulated by Lackenby \cite{lackenby2017every}.

\begin{theorem}[Theorem 4.1 of \cite{lackenby2017every}, Theorem 4.18 of \cite{budney2005jsj}]
\label{jsj_of_knot_complement}

Suppose $K$ is a knot in $S^3$ such that $E(K)$ has at least one JSJ-torus. (In this case, $K$ is a satellite knot).  Let $M$ be the JSJ-piece of $E(K)$ containing $\partial N(K)$. Then $M$ has one of the following forms:

\begin{enumerate}

\item an annulus based Seifert fibered space with one singular fiber; in this case, the knot $K$ is  a cable of a non-trivial knot.

\item a planar surface based Seifert fibered space with at least three boundary components and with no singular fibers; each regular fiber has the meridional slope on $\partial N(K)$; 


\item a hyperbolic manifold which is homeomorphic to the complement of some hyperbolic link $L$ in $S^3$; this link becomes a trivial link or the unknot if the component corresponding to $\partial N(K)$ is removed; the homeomorphism from the JSJ-piece $M$ to $S^3 \backslash N(L)$ sends each slope $p/q$ on $\partial N(K) $ to the slope $p/q$ on the corresponding component of $\partial N(L)$;


\end{enumerate}

\end{theorem}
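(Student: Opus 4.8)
The plan is to follow Budney's analysis of the companionship (JSJ) tree of $E:=E(K)$. Since every embedded torus in $S^3$ is separating, the dual graph of the JSJ-decomposition of $E$ is a tree; root it at the vertex for $M$, the unique JSJ-piece meeting $\partial N(K)$. By the JSJ-decomposition theorem $M$ is atoroidal or Seifert fibered. First I would record that $M\ncong T^2\times I$: otherwise the second boundary torus of $M$ is a JSJ-torus, and part (3) of Proposition~\ref{criterion} forces $E$ to be a torus bundle over $S^1$, contradicting the fact that $E$ has a single boundary torus. Hence, since $M$ has nonempty boundary and is therefore Haken, Thurston's hyperbolization theorem applies: if $M$ is atoroidal and not Seifert fibered, it is hyperbolic. (The twisted $I$-bundle over the Klein bottle, the one atoroidal Haken manifold with a single torus boundary that is Seifert fibered without being a product, is handled in the Seifert fibered case.) So there are two cases to treat.

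\emph{Seifert fibered case.} The key point is that $M$ embeds in a solid torus: for a boundary torus $\partial_i M$ of $M$ other than $\partial N(K)$, the far side of the JSJ-torus $\partial_i M$ is the exterior of some knot, so the $M$-side of $\partial_i M$ is a solid torus $V_i\subset S^3$ with $\partial V_i=\partial_i M$ a boundary component of $M$ and $\partial N(K)\subset V_i$. Invoking the classification of Seifert fibered spaces that embed in a solid torus (due to Seifert; see also Budney), the base orbifold of $M$ is planar and carries at most one exceptional fibre. Running this through the minimality criterion of Proposition~\ref{criterion} --- no Seifert structure on $M$ may extend across a JSJ-torus, and $M$ is not a product --- I expect the survivors to be exactly: (i) base an annulus with one cone point, so $M$ is a cable space, whence $K$ is the corresponding cable of the core of the ambient solid torus, and that core is nontrivial, since otherwise $K$ would be a torus knot and its exterior atoroidal --- this is case (1); and (ii) base a planar surface with no exceptional fibre, forcing $M\cong S^1\times\Sigma_{0,b}$, where $b\le 2$ is excluded ($b=1$ gives a solid torus, whose boundary would be a compressible JSJ-torus, and $b=2$ gives $T^2\times I$), so $b\ge 3$. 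In case (ii) $M$ is the complement of a keychain link and $K$ is a connected sum of $b-1$ prime knots with $\partial N(K)$ on the ``large unknot'' component; computing the Seifert fibre on that component shows it is meridional, which gives case (2).

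\emph{Hyperbolic case.} Here I would exhibit $M$ abstractly as a link exterior in $S^3$. For each boundary torus $\partial_i M$ with $i\ge 1$, incompressibility of the JSJ-torus $\partial_i M$ makes the far-side piece $X_i$ the exterior of a knot $C_i\subset S^3$, and $E$ is reconstructed from $M$ by splicing the $X_i$ back along the $\partial_i M$ in the sense of Eisenbud--Neumann. ``Untying'' each companion --- replacing $X_i$ by an unknotted solid torus via the meridian--longitude identification --- leaves $M$ untouched and, since each $S^3\setminus X_i$ is a solid torus, again produces $S^3$; so $M\cong S^3\setminus N(L)$ with $L=K'\cup c_1\cup\cdots\cup c_b$, where $K'$ (dual to $\partial N(K)$) is the pattern knot of $K$ with all companions untied and the $c_i$ are the cores of the new solid tori. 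By construction the homeomorphism is the identity on $\partial N(K)=\partial N(K')$, so slopes $p/q$ are preserved, and $L$ is hyperbolic because $M$ is. That deleting $K'$ leaves $\{c_i\}$ an unlink is where the tree structure of the JSJ-graph and the ambient $S^3$ must be exploited: intuitively the untied $c_i$ now sit in disjoint product collars and can be isotoped into disjoint balls, but making this rigorous is part of the substantive content.

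\emph{Main obstacle.} The hard part is the Seifert fibered case: identifying precisely which Seifert fibered spaces occur as the JSJ-piece adjacent to $\partial N(K)$, and especially proving the meridional-fibre property in case (2). This is a careful bookkeeping of Seifert invariants against the minimality criterion under the constraint of embedding in a solid torus with a prescribed boundary curve; together with the unlink claim in the hyperbolic case, it is the place where I would lean on the analyses of Budney~\cite{budney2005jsj} and Lackenby~\cite{lackenby2017every} rather than reprove everything from scratch.
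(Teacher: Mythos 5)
The paper does not prove this statement at all: it is imported verbatim as Theorem 4.1 of \cite{lackenby2017every} (Theorem 4.18 of \cite{budney2005jsj}), so there is no in-paper argument to compare yours against. Your outline is a faithful reconstruction of Budney's proof strategy: the tree structure of the JSJ graph (tori in $S^3$ separate), the exclusion of $T^2\times I$, Thurston's hyperbolization to split into the Seifert fibered and hyperbolic cases, the solid-torus companionship picture for the Seifert case, and the splice/untying description of $M$ as a link exterior in the hyperbolic case are all the right ingredients, and you correctly identify where the substantive content lies.

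As a standalone proof, however, it has genuine gaps --- which, to your credit, you flag yourself. (a) The classification of Seifert fibered pieces: the assertion that the base is planar with at most one exceptional fibre is not a one-line consequence of embedding in a solid torus (torus knot exteriors embed in $S^3$ and have two exceptional fibres; they are excluded here only because $M$ must have at least two boundary components), and the computation that the regular fibre of a composing space is meridional on $\partial N(K)$ is asserted rather than proved. (b) In the hyperbolic case, the claim that deleting the component dual to $\partial N(K)$ leaves an unlink, and the claim that the untying homeomorphism preserves the preferred longitude (null-homology of the longitude must be checked to survive the replacement of each companion exterior by a solid torus), both require the nesting structure of the companion solid tori and are exactly the content of Budney's argument. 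Two smaller slips: the deduction ``the far side is a knot exterior, so the near side is a solid torus'' runs backwards --- one first applies the solid torus theorem plus incompressibility to conclude that the side containing $K$ is a solid torus; and the exclusion of $b=1$ in case (ii) is not because a JSJ-torus would be compressible but because $M$ would then equal $E(K)$ with no JSJ-tori. Given that the paper itself outsources the entire theorem, deferring these points to \cite{budney2005jsj} and \cite{lackenby2017every} is reasonable, but they are where the proof actually lives.
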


The JSJ-piece $M$ in the first case is called a \emph{cable space}. In this case, the knot $K$ is  a cable knot $C_{p',q'}(K')$. By the definition of cable knots, we can regard $K$ as a $(p',q')$-curve in $N(K') \subset S^3$.  Then the JSJ-piece $M$ is the closure of $N(K') \backslash N(K)$, where $N(K)$ is chosen to be a union of $(p',q')$-curves in $N(K')$. The slope of each regular fiber of such a cable space on $\partial N(K)$ is described in the following lemma.

\begin{lemma}[cf. Lemma 7.2 of \cite{gordon1983dehn}]
\label{cable_space_fiber_slope}
Suppose $K$ is a cable knot $C_{p',q'}(K')$, then the closure of  $N(K') \backslash N(K)$ is a Seifert fibered space. Each of its regular fibers  has slope $p'/q'$ on $\partial N(K')$ and  has slope $p'q'/1$ on $\partial N(K)$.

\end{lemma}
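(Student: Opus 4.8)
The plan is to set up explicit coordinates on the two boundary tori of the cable space $X := \overline{N(K')\setminus N(K)}$ and to track a single regular fiber through these coordinates. First I would fix the standard meridian-longitude pair $(\mu', \lambda')$ on $\partial N(K')$ (with $\lambda'$ the preferred longitude of $K'$), so that the curve $K = C_{p',q'}(K')$ sitting on a concentric torus inside $N(K')$ is isotopic to the $(p',q')$-curve, i.e. to $p'\mu' + q'\lambda'$ in this basis. The cable space $X$ fibers as a Seifert fibered space over an annulus with one singular fiber; its regular fibers are precisely the parallel copies of this $(p',q')$-torus knot lying on the concentric tori between $\partial N(K')$ and $\partial N(K)$. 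Hence on $\partial N(K')$ a regular fiber is the curve $p'\mu' + q'\lambda'$, which in the slope notation of the \texttt{notation} block (meridional winding over longitudinal winding) is exactly the slope $p'/q'$. That disposes of the first assertion essentially by definition.

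The substantive part is the computation on $\partial N(K)$. Here I would introduce the meridian-longitude pair $(\mu, \lambda)$ for the knot $K$ itself, with $\lambda$ its preferred (null-homologous in $E(K)$) longitude. The key geometric input is that $\partial N(K)$ and $\partial N(K')$ are the two boundary tori of the product-like region $X$ (away from the singular fiber), so a regular fiber appears on both with the same underlying curve, and I can compare the two framings. Concretely, I would compute two things: the meridian $\mu$ of $K$ in terms of the homology of $\partial N(K')$ (it bounds a disk in $N(K)$, which is a solid torus neighborhood of the $(p',q')$-curve, so $\mu$ is homologous to $p'\mu' + q'\lambda'$ pushed to $\partial N(K)$ — i.e. the meridian of $K$ is the fiber slope's ``parallel'' class), and the preferred longitude $\lambda$ of $K$, which must be null-homologous in $E(K) = E(K') \cup X$. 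The latter is the delicate point: computing the linking number / writhe contribution shows that a $(p',q')$-curve on $\partial N(K')$, viewed in $S^3$, has self-linking $p'q'$ with its pushoff along $\lambda'$, so the preferred longitude $\lambda$ of $K$ differs from the $\lambda'$-pushoff framing by $-p'q'$ meridians. Putting these together, the regular fiber — which is the $\lambda'$-framed pushoff of $K$ — is $\lambda + p'q'\mu$ on $\partial N(K)$, i.e. the slope $p'q'/1$.

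I expect the main obstacle to be the bookkeeping of framings: one must be careful that ``regular fiber of $X$ on $\partial N(K)$'' means the boundary curve of an annulus in $X$ joining a regular fiber on $\partial N(K')$ to $\partial N(K)$, and that this annulus realizes the $\lambda'$-Seifert framing rather than the $K$-preferred framing. The cleanest way to make this rigorous is a homological computation in $H_1$ of the various pieces: write $H_1(\partial N(K)) \to H_1(X) \to H_1(\partial N(K'))$ and use that $X$ deformation retracts (after removing the singular fiber, or by a Mayer--Vietoris argument) onto a space whose $H_1$ is generated by a regular fiber and the core; alternatively one can simply cite the standard cabling formula for how the Seifert framing on a satellite companion relates to the preferred framing, which is precisely the $p'q'$ correction term. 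Since the lemma is stated as ``cf. Lemma 7.2 of \cite{gordon1983dehn}'', I would in fact lean on Gordon's computation for the hard framing identity and supply only the coordinate translation into the slope conventions fixed in this paper.
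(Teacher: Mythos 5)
Your proposal is correct and follows essentially the same route as the paper: identify $K$ with a regular fiber of the Seifert fibered solid torus $N(K')$, read off the slope $p'/q'$ on $\partial N(K')$ by definition, and then compute the slope on $\partial N(K)$ from the fact that two parallel regular fibers (i.e.\ $K$ and its fiber-direction pushoff) have linking number $\pm p'q'$, which forces the fiber to be the $(p'q',1)$-curve in the preferred framing of $K$. The only quibble is your parenthetical claim that the meridian $\mu$ of $K$ is ``homologous to $p'\mu'+q'\lambda'$ pushed to $\partial N(K)$'' --- that identification is off (the meridian bounds a disk in $N(K)$ and is not the fiber class), but it plays no role in your actual derivation, which correctly rests on the $p'q'$ linking-number computation.
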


\begin{proof}
The regular fibers of the Seifert fibered solid torus $N(K')$ are defined to be the $(p',q')$-curves. In particular, each regular fiber has slope $p'/q'$ on $\partial N(K')$. The Seifert fibered space $N(K')$ has a singular fiber of multiplicity $\abs{q'}$, which is the core of $N(K')$. The knot $K$ can be identified as a regular fiber of $N(K')$. Now $N(K') \backslash N(K)$ inherits the Seifert fibered structure from $N(K')$. Given two regular fibers of $N(K')$, we see that their linking number is $\pm p'q'$, and hence the regular fibers on $\partial N(K)$ are the $(p'q',1)$-curves.
\end{proof}

\section{JSJ-decompositions and Dehn surgeries on Cable knots }

In this section, we study the JSJ-structure of the surgered manifold $S^3_r(C_{p,q}(K))$. First, we need to know whether a surgery creates  essential tori in our case. The following theorems on toroidal surgeries by Gordon and Luecke are useful for us.

\begin{theorem}[\cite{gordon1995dehn}]
\label{toroidal_surgery_s3}
Suppose $K$ is a hyperbolic knot in $S^3$. If $S^3_{i/j}(K)$ contains an essential torus, then $\abs{j} \leq 2$.
\end{theorem}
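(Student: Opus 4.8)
This is the toroidal surgery theorem of Gordon and Luecke, and rather than reproduce their long combinatorial argument I will only indicate its skeleton. Write the surgery slope as $\gamma = i/j$ and let $\mu$ denote the meridian of $K$, so that $\Delta(\gamma,\mu) = \abs{j}$; the goal is to show $\Delta(\gamma,\mu)\le 2$, so I assume for contradiction that $\Delta(\gamma,\mu)\ge 3$. Let $J\subset S^3_\gamma(K)$ be the surgery solid torus, so that $S^3_\gamma(K) = E(K)\cup_{\partial}J$, and let $\hat T$ be an essential torus in $S^3_\gamma(K)$, isotoped to meet $J$ in a minimal number of meridian disks. Since $K$ is hyperbolic, $E(K)$ is atoroidal with incompressible boundary, so $\hat T$ must meet $J$, and $P := \hat T\cap E(K)$ is a non-empty incompressible and boundary-incompressible punctured torus properly embedded in $E(K)$, each of whose boundary components has slope $\gamma$ on $\partial E(K)$. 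Let $p\ge 1$ be the number of these boundary components; since $\hat T$ separates the rational homology sphere $S^3_\gamma(K)$, the meridian disks $\hat T\cap J$ must alternate sides around $J$, forcing $p$ to be even, so $p\ge 2$.

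Next I would bring in a second essential surface $Q$ in $E(K)$, of a boundary slope dictated by the ambient manifold $S^3$; this is where the hypothesis ``in $S^3$'' enters, for instance through a Seifert surface of $K$, whose boundary is the longitude, or a meridional planar surface, after the standard reductions of Culler--Gordon--Luecke--Shalen. Isotope $P$ and $Q$ so that $P\cap Q$ consists of arcs and circles essential in both, with no circle bounding a disk. The arcs of $P\cap Q$, together with the intersections of $\partial P$ and $\partial Q$ on $\partial E(K)$, assemble into two labelled graphs: $G_P$, drawn on the closed torus $\hat T$, whose fat vertices are the components of $\partial Q$ and whose edges are the arcs of $P\cap Q$; and $G_Q$, drawn on the closed-up surface $\hat Q$, whose fat vertices are the $p$ components of $\partial P$. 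Around each vertex the edge ends carry labels recording to which boundary curve of the other surface they belong, and because $\partial P$ has slope $\gamma$ with $\Delta(\gamma,\mu)\ge 3$, the labels at each vertex of $G_Q$ cycle through at least three distinct values.

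The heart of the proof --- and the step I expect to be by far the main obstacle, as it constitutes essentially all of Gordon and Luecke's work --- is the combinatorial analysis of the pair $(G_P,G_Q)$. The ingredients are: (i) neither graph can carry a \emph{Scharlemann cycle} or an extended one, since such a configuration produces a lens space or a reducing sphere, or else makes $\hat T$ compressible or boundary-parallel, contradicting the hyperbolicity of $K$ and the minimality of $\hat T$; (ii) a reduced graph on the torus $\hat T$ with $V$ vertices has at most $3V$ edges, by the Euler characteristic of $\hat T$, which bounds the number of parallelism classes of edges of $G_P$; (iii) the labelling forced by $\Delta(\gamma,\mu)\ge 3$, fed through a pigeonhole count on the labels at the vertices of $G_Q$, produces a large family of mutually parallel edges in $G_Q$, hence --- by the duality between the two graphs and by (i) --- an edge count in $G_P$ violating (ii). The resulting numerical contradiction shows that in fact $\Delta(\gamma,\mu)\le 2$, i.e.\ $\abs{j}\le 2$. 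The finer analysis of the extremal case $\Delta(\gamma,\mu)=2$ is not needed for what follows.
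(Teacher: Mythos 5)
This statement is quoted verbatim from Gordon and Luecke \cite{gordon1995dehn}; the paper offers no proof and uses it as a black box, so there is no internal argument to compare yours against. Judged on its own terms, your write-up is an outline of the Gordon--Luecke strategy rather than a proof: the reduction to a punctured essential torus $P=\hat T\cap E(K)$ with boundary slope $\gamma$, the second surface $Q$, and the pair of intersection graphs are set up in broad strokes, but you explicitly defer the entire combinatorial analysis --- the exclusion of (extended) Scharlemann cycles, the parity and ordering rules for the labels, and the edge-counting contradiction against the Euler-characteristic bound. That analysis is not a routine verification; it is the whole content of the theorem and occupies essentially all of \cite{gordon1995dehn}. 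As written, your argument establishes only the reduction to that analysis, so there is a genuine gap.

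Two concrete points where the sketch is also off. First, the second surface is not interchangeable between ``a Seifert surface'' and ``a meridional planar surface'': in the actual proof $Q$ is a meridional planar surface obtained by putting $K$ in thin position, so that $G_Q$ lives on a sphere and $\Delta(\gamma,\mu)=\abs{j}$ is precisely the number of times each label repeats around each vertex of $G_Q$; a longitudinal Seifert surface would change both the graph and the numerology. Second, you have swapped vertices and labels: the fat vertices of $G_P$ on $\hat T$ are the disks of $\hat T\setminus P$, hence correspond to the components of $\partial P$, while the components of $\partial Q$ appear as the \emph{labels} on edge-endpoints (and symmetrically for $G_Q$). With the correct convention, the statement feeding the pigeonhole count is that around each vertex of $G_Q$ every label in $\{1,\dots,p\}$ occurs exactly $\abs{j}\ge 3$ times. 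For the purposes of this paper the honest options are either to cite the theorem, as the author does, or to reproduce the Gordon--Luecke combinatorics in full; the intermediate sketch does neither.
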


\begin{theorem}[\cite{gordon1999toroidal}]
\label{toroidal_surgery}
Let $M$ be a simple $3$-manifold with a toroidal boundary component. We use $M(r)$ and $M(s)$ to denote its $r$-slope and  $s$-slope Dehn fillings along this boundary component. If $M(r)$ is boundary reduci{}ble and $M(s)$ is toroidal, then $\Delta(r,s) \leq 2.$
\end{theorem}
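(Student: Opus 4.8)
The plan is to prove this by the standard combinatorial intersection-graph method; the sharp bound stated here is Gordon and Luecke's, and I will sketch its structure. Let $T_0 \subset \partial M$ be the torus being filled and set $\Delta = \Delta(r,s)$. Since $M$ is simple it is in particular boundary-irreducible, so the essential disk witnessing that $M(r)$ is boundary-reducible cannot be isotoped off the $r$-filling solid torus; cutting it along that solid torus and cleaning up in the usual way yields a properly embedded, incompressible and boundary-incompressible \emph{planar} surface $P \subset M$, all of whose boundary circles on $T_0$ have slope $r$, chosen with the fewest boundary components. Similarly, an essential torus in $M(s)$ intersected with $M$ gives a properly embedded, incompressible and boundary-incompressible punctured torus $Q \subset M$, all of whose boundary circles lie on $T_0$ with slope $s$, again chosen with minimal boundary.

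Next I would isotope $P$ and $Q$ to meet transversally in the fewest points. Minimality removes all circle components and all boundary-parallel arcs of $P \cap Q$, so the intersection is a union of essential arcs, and each boundary circle of $P$ meets each boundary circle of $Q$ in exactly $\Delta$ points on $T_0$. These arcs cut $P$ and $Q$ into disk faces and define the fat-vertexed graphs $G_P \subset P$ and $G_Q \subset Q$: the vertices are the boundary circles, the edges are the arcs, the $\Delta$ families of endpoints around each vertex are cyclically labelled, and the usual parity rules relate the labellings on the two graphs. One then supposes $\Delta \ge 3$ for contradiction. Euler-characteristic counts on the planar surface $P$ and the toroidal surface $Q$ bound the number of disk faces and the valences, so after passing to a reduced graph some vertex has small valence; the standard ``great web''/parity analysis then forces a Scharlemann cycle in $G_P$ or in $G_Q$. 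Via the band-and-meridian-annulus construction, a Scharlemann cycle in $G_P$ (whose vertices carry slope $r$) produces a lens space or $\mathbb{RP}^3$ connected summand in $M(r)$, or an essential sphere, disk, annulus, or torus inside $M$ itself; a Scharlemann cycle in $G_Q$ does the analogous thing with slope $s$. In every case this contradicts $M$ being simple, so $\Delta \le 2$.

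The step I expect to be hardest is precisely this last one. The crude Euler-characteristic counting only yields a bound like $\Delta \le 3$ or $\Delta \le 4$; pushing it down to the sharp value $2$ — that is, eliminating $\Delta = 3$ — requires the fine structure theory of graphs in the torus for $G_Q$, careful bookkeeping of which Scharlemann cycles and ``extended'' Scharlemann cycles can occur simultaneously on $G_P$ and $G_Q$, and a verification that every admissible configuration violates either simplicity of $M$ or minimality of the chosen surfaces. This is the technical heart of the Gordon--Luecke argument, and the fact that $\Delta(r,s) = 2$ is realized by explicit examples shows that no essentially softer argument can improve the bound.
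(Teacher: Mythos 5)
This statement is not proved in the paper at all: it is quoted from Gordon--Luecke \cite{gordon1999toroidal} and used purely as a black box, its only role being to feed Corollary~\ref{toroidal_surgery_not_solid_torus}. So there is no internal proof to compare yours against. Judged on its own terms, your sketch correctly reconstructs the architecture of the cited argument: the planar surface $P$ obtained from a boundary-reducing disk of $M(r)$ (with, as your phrasing quietly allows, one boundary component on $\partial M$ away from $T_0$ and the rest of slope $r$ on $T_0$), the punctured torus $Q$ obtained from an essential torus in $M(s)$, minimal transverse position, and the labelled fat-vertexed graphs $G_P$, $G_Q$ with their parity rules and Scharlemann cycles. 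One bookkeeping slip: a Scharlemann cycle in $G_P$ has its edges on $Q$ and its corners on the annulus in $T_0$ between two adjacent boundary circles of $Q$, so the lens space or $\mathbb{RP}^3$ summand (or compression of the capped-off torus) that it produces lives in $M(s)$, not in $M(r)$ as you wrote; symmetrically for $G_Q$.

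The genuine gap is the one you flag yourself. Everything up to ``suppose $\Delta \ge 3$'' is routine surface surgery, and the theorem \emph{is} precisely the assertion that $\Delta \ge 3$ leads to a contradiction. Your sketch replaces that step with ``the standard great-web/parity analysis then forces a Scharlemann cycle,'' which is a pointer to the technical core of \cite{gordon1999toroidal} --- the fine structure of reduced graphs in the torus, the interaction of (extended) Scharlemann cycles on the two graphs, and the verification that each surviving configuration violates simplicity of $M$ or minimality of $P$ and $Q$ --- rather than an argument. As you correctly observe, Euler-characteristic counting alone does not reach the sharp bound, and since $\Delta(r,s)=2$ is realized by examples no soft argument can. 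For a paper that merely applies the theorem, citing it, as this one does, is the right move; a self-contained proof would require expanding every one of the ``standard'' steps in your outline into the full case analysis of the reference.
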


\begin{cor}[cf. Theorem 2.8 of \cite{lackenby2017every}]
\label{toroidal_surgery_not_solid_torus}
Let $M$ be the complement of an unlink or the unknot in $S^3$. Let $K$ be a knot in $M$ such that $M\backslash N(K)$ is hyperbolic. Then the $p/q$-slope Dehn surgery with $q > 2$ on $K$ is atoroidal and each boundary component of the surgered manifold is incompressible.  
\end{cor}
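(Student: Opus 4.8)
The plan is to translate both conclusions into statements about exceptional Dehn fillings of a simple manifold and then apply the relevant distance bounds. Write $L$ for the unknot or unlink with $M = S^3\setminus N(L)$, and set $X = M\setminus N(K) = S^3\setminus(N(L)\cup N(K))$; by hypothesis $X$ is hyperbolic, hence simple. Its boundary is $T_0\sqcup T_1\sqcup\cdots\sqcup T_n$, where $T_0=\partial N(K)$ and $T_1,\dots,T_n$ (with $n\ge 1$) are the components of $\partial M$. Filling $X$ along $T_0$ with the meridional slope $\infty$ of $K$ gives back $X(\infty)=M$, the exterior of an unknot or unlink, i.e.\ a boundary connected sum $\natural_n(S^1\times D^2)$; in particular $X(\infty)$ is boundary-reducible. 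The surgered manifold in question is $Y = M_{p/q}(K)=X(p/q)$, and since $\infty = 1/0$ we have $\Delta(\infty,p/q)=\abs{q}\ge 3$.

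For the atoroidal assertion I would argue by contradiction. If $Y$ contained an essential torus, then, since $X$ is simple, Theorem~\ref{toroidal_surgery} would apply to the two fillings of $X$ along $T_0$ with slopes $\infty$ and $p/q$: as $X(\infty)=M$ is boundary-reducible and $X(p/q)=Y$ is toroidal, this gives $\Delta(\infty,p/q)\le 2$, contradicting $\abs{q}\ge 3$. Hence $Y$ is atoroidal.

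For incompressibility of the boundary I would argue similarly. If some component $T_i$ of $\partial Y$ were compressible, then $Y$ would be boundary-reducible, so $X$ would admit two distinct fillings along $T_0$ — by $\infty$ and by $p/q$ — both producing boundary-reducible manifolds, namely $M$ and $Y$. The handle addition theorem for simple manifolds — the boundary-reducible analogue of Theorem~\ref{toroidal_surgery}; see \cite{lackenby2017every} — would then force $\Delta(\infty,p/q)\le 1$, again contradicting $\abs{q}\ge 3$. Hence every boundary component of $Y$ is incompressible. (The geometric picture in the main case: if $Y$ is irreducible and $T_i$ compresses, then $T_i$ bounds an embedded solid torus $V\subset Y$; after a standard isotopy $V$ either misses the surgery region, and then lies in the hyperbolic manifold $X$ with $\partial V=T_i\subset\partial X$, so the meridian disk of $V$ compresses $\partial X$ — absurd — or $V$ contains the surgery solid torus, forcing $Y$ itself to be a solid torus, a boundary-reducible filling handled by the same bound.)

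The main obstacle is the boundary-incompressibility step: unlike the atoroidal conclusion, which is a direct corollary of Theorem~\ref{toroidal_surgery}, it requires the parallel Gordon--Luecke/Wu-type bound for boundary-reducing Dehn fillings (and, in the reducible sub-case, the corresponding bound for reducible fillings). The hypothesis $q>2$ is exactly calibrated: it makes $\Delta(\infty,p/q)=\abs{q}$ strictly larger than the constant $2$ governing these exceptional fillings.
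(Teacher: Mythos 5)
Your proposal is correct and follows essentially the same route as the paper: the atoroidal claim via Theorem~\ref{toroidal_surgery} applied to the boundary-reducible filling $X(\infty)=M$ versus $X(p/q)$ (with $\Delta(\infty,p/q)=\abs{q}\ge 3$), and boundary-incompressibility via Wu's theorem that two boundary-reducible fillings of a simple manifold have distance at most $1$, which is exactly the result the paper invokes as \cite{MR1167169}.
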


\begin{proof}
The manifold $M(\infty)$ is boundary reducible. By Theorem \ref{toroidal_surgery}, the $p/q$-slope surgery is atoroidal since $\Delta(\infty,p/q)  = \abs{q} \ge 2$. In this case, there is no essential disk in this surgered manifold, by \cite{MR1167169}.
\end{proof}

Now we  describe the JSJ-decomposition of  $S^3_r(C_{p,q}(K))$. By Theorem \ref{classification}, the surgered manifold is classified into three cases. Note that only the first case corresponds to reducible manifolds. (See Theorem \ref{cable_conjecture}.) The remaining two cases correspond to manifolds obtained by gluing the knot complement of $K$ in $S^3$ with a solid torus or with a Seifert fibered space.

In the following two lemmas, we assume that the knot complement $E(K)$ has at least one JSJ-torus. In this case, $K$ is a satellite knot. 
We start with the second case of Theorem \ref{classification}.

\begin{lemma}
\label{knot_complement_glue_torus}
Let $M$ be the JSJ-piece in $E(K)$ containing $\partial N(K)$. Then the Dehn filling $M(r/q^2)$ in   $S^3_{r/q^2}(K)$ is atoroidal or Seifert fibered, and the boundary tori are incompressible.  
\end{lemma}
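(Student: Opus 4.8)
The plan is to combine the description of the JSJ-piece $M$ from Theorem~\ref{jsj_of_knot_complement} with the toroidal-surgery results, handling the three possible forms of $M$ separately. In all three cases, the point is that Dehn filling $M$ along $\partial N(K)$ with slope $r/q^2$ is the filling of a piece that either is already very rigid (Seifert fibered) or is hyperbolic, and for the hyperbolic subcases we must quantify the denominator of $r/q^2$ and invoke the bound $\Delta(\infty, \cdot)\le 2$ from Theorem~\ref{toroidal_surgery} (equivalently Corollary~\ref{toroidal_surgery_not_solid_torus}). The key arithmetic observation, which I would record first, is that writing $r = m/n$ with $m = npq\pm 1$ (the defining condition of the second case of Theorem~\ref{classification}), the slope $r/q^2 = m/(nq^2)$ has denominator $\abs{nq^2}$, and since $\gcd(m,n)=1$ forces $\gcd(m, nq^2)$ to divide $q^2$ while $m \equiv \pm 1 \pmod{q}$ gives $\gcd(m,q)=1$, the fraction $m/(nq^2)$ is already in lowest terms; in particular its denominator is $\abs{n}q^2 \ge q^2 \ge 4 > 2$ whenever $\abs{n}\ge 1$ and $\abs{q}\ge 2$. (The case $n = 0$, i.e.\ $r = \infty$, is excluded since then $m = \pm 1$ is not an integer surgery slope, or alternatively is handled directly as $\infty$-filling giving back $E(K)$ up to the JSJ-piece, which is $M$ itself.)

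Next I would run through the three forms of $M$ given by Theorem~\ref{jsj_of_knot_complement}. If $M$ is a cable space (case~1), it is Seifert fibered with one singular fiber over an annulus; any Dehn filling of a Seifert fibered space along a boundary torus is either Seifert fibered or, in the exceptional case where the filling slope is the fiber slope, a reducible or very small manifold, and in our situation $M(r/q^2)$ will again be Seifert fibered (or a connected sum, but that belongs to the first case of Theorem~\ref{classification}, not the second) — in any event atoroidal or Seifert fibered, with incompressible boundary since the other boundary tori of $M$ remain incompressible after a non-fiber filling. If $M$ is the planar-surface Seifert fibered space of case~2 with regular fiber of meridional slope on $\partial N(K)$, then filling along a slope $p/q$ with $\abs{q}\ge 2 $ — in particular our slope $m/(nq^2)$, whose meridional coefficient $m$ and longitudinal coefficient $nq^2$ satisfy $\Delta(m/(nq^2), \text{meridian}) = \abs{nq^2}\ge 4 \ne 0$ — is a non-fiber filling, so the result is again Seifert fibered over a planar surface with incompressible boundary. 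The only subtle case is case~3, where $M$ is the complement of a hyperbolic link $L$ in $S^3$ with the property that the slope $p/q$ on $\partial N(K)$ corresponds to the slope $p/q$ on the relevant component of $\partial N(L)$, and removing that component leaves a trivial link or the unknot; here I apply Corollary~\ref{toroidal_surgery_not_solid_torus} with that $M$ (a link complement of the required type, with $M\setminus N(K)$ hyperbolic), using that the denominator of $r/q^2 = m/(nq^2)$ exceeds $2$ as computed above, to conclude that $M(r/q^2)$ is atoroidal with incompressible boundary.

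The main obstacle I anticipate is the case-3 analysis: I need to be confident that the slope $r/q^2$, when transported to the boundary component of the link complement $S^3\setminus N(L)$ via the slope-preserving homeomorphism of Theorem~\ref{jsj_of_knot_complement}, really does have denominator $>2$ in the normalization that Corollary~\ref{toroidal_surgery_not_solid_torus} uses — i.e.\ with respect to the preferred meridian–longitude framing on $\partial N(K)$, which is exactly the framing the homeomorphism preserves — and that the hypothesis "$M\setminus N(K)$ is hyperbolic" in the corollary is met, which follows because $M$ itself is the hyperbolic JSJ-piece and $\partial N(K)$ is one of its boundary tori, so $M = M\setminus N(K)$ is already hyperbolic (there is no further knot to drill). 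A secondary technical point is to make sure that in the Seifert-fibered cases the filling slope $r/q^2$ is genuinely not the fiber slope: in case~2 the fiber slope is meridional ($0$ in our framing conventions is the longitude, $\infty$ the meridian — I will state the convention explicitly), and $m/(nq^2)$ with $nq^2\ne 0$ is not the meridian; in case~1 (cable space) the fiber slope on $\partial N(K)$ is an integral slope $p'q'$ by Lemma~\ref{cable_space_fiber_slope} while $m/(nq^2)$ is a genuine fraction with denominator $\ge 4$, hence not integral, so again we are doing a non-fiber filling. Once these three cases are dispatched, the lemma follows.
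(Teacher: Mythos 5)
Your proposal follows the paper's proof essentially verbatim: the same reduction of $r/q^2 = m/(nq^2)$ to lowest terms with denominator $\abs{n}q^2 > 2$, the same case split according to Theorem \ref{jsj_of_knot_complement}, the same appeal to Corollary \ref{toroidal_surgery_not_solid_torus} in the hyperbolic case, and the same fiber-slope comparisons ($p'q'/1$, resp.\ $\infty$, against $m/(nq^2)$) to see that the Seifert fibered structure extends in the other two cases. The only point where you are thinner than the paper is the incompressibility of the boundary in the planar-base subcase, which you assert but the paper actually proves, by observing that an essential disk in a Seifert fibered space must be vertical or horizontal and that neither is possible here.
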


\begin{proof}
Suppose $M$ is hyperbolic. By Theorem \ref{jsj_of_knot_complement}, the JSJ-piece $M$ is homeomorphic to $S^3 \backslash N(L)$, where $L$ is a hyperbolic link. The link $L$ has the property that, if we remove the component corresponding to $\partial N(K)$, then $L$ becomes the unknot or an unlink.  Note that $r/q^2 = m/nq^2 = (npq\pm 1)/nq^2$, and the denominator is coprime with the numerator.  Since the denominator satisfies $nq^2 > 2$, the Dehn filling $M(r/q^2)$ is atoroidal and its boundary tori are incompressible by Corollary \ref{toroidal_surgery_not_solid_torus}.


Suppose $M$ is a Seifert fibered space. We want to show that the Seifert fibered structure of $M$ extends to a Seifert fibered structure of $M(r/q^2)$. In other words, the meridian of the solid torus is not glued to a regular fiber of $M$. Thus we  need to compare the slopes. By Theorem \ref{jsj_of_knot_complement}, there are two possibilities when $M$ is Seifert fibered. First, suppose $M$ is a cable space. In this case, the knot $K$ is a cable knot $C_{p',q'}(K')$, by Theorem \ref{jsj_of_knot_complement}. Each regular fiber of $M$ has slope $p'q'/1$ on $\partial N(K)$, by Lemma \ref{cable_space_fiber_slope}. On the other hand, the surgery slope is $m/nq^2$. These two slopes do not match since $\abs{q} \ge 2.$ Now we suppose $M$ has a planar base. By Theorem \ref{jsj_of_knot_complement}, the slope of each regular fiber of $M$ on $\partial N(K)$ is meridional, which is $\infty$, and hence does not match the gluing slope $m/nq^2$.  

We also need to show that the boundary tori of the new Seifert fibered spaces $M(r/q^2)$ are incompressible. The case that $M$ is a cable space follows from Theorem \ref{classification}. Indeed, each Seifert fibered space $SFS_r$ in Theorem \ref{classification} is exactly a Dehn filling $M \cup_{T^2} (S^1 \times D^2)$ such that the Seifert fibered structure of the cable space $M$ extends. Now suppose $M$ is a planar based Seifert fibered spaces as in Theorem \ref{jsj_of_knot_complement}. Since $M(r/q^2)$ is Seifert fibered, each essential disk is either vertical or horizontal (Proposition 1.11 of \cite{hatcher2000notes}). In our case, an essential disk can not be a union of fibers, and hence not vertical. It can not be horizontal either, since a disk can not be a branched cover of a planar surface with more than two boundary components. Hence there are no essential disks in  $M(r/q^2)$.

\end{proof}
The following lemma corresponds to the third case of Theorem \ref{classification}.
\begin{lemma}
\label{knot_complement_glue_SFS}
Let $M$ be the JSJ-piece in $E(K)$ containing $\partial N(K)$. Then the gluing torus of $M \cup_{T^2} SFS_r$ is JSJ in $S^3_{r}(C_{p,q}(K))$.

\end{lemma}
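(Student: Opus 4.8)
The plan is to identify the JSJ-collection of $S^3_r(C_{p,q}(K))$ explicitly: I will show that the JSJ-tori of $E(K)$ together with the torus $T=\partial N(K)$ form exactly the JSJ-collection of $S^3_r(C_{p,q}(K))$, which in particular makes $T$ a JSJ-torus. We are in the third case of Theorem \ref{classification}, so $m\neq npq$ and $m\neq npq\pm1$; hence $\abs{npq-m}\ge2$ and $S^3_r(C_{p,q}(K))=E(K)\cup_T SFS_r$. Write $E(K)=N\cup_T M$, where $M$ is the JSJ-piece of $E(K)$ meeting $\partial N(K)$ and $N$ collects the remaining JSJ-pieces. I would deduce the claim from Remark \ref{gluing_torus_is_JSJ_remark} (which rests on Proposition \ref{criterion}), applied with $M_1=E(K)$, $M_2=SFS_r$, and gluing torus $T$.

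First I would verify the hypotheses. The torus $T$ is incompressible in $E(K)$ because $K$ is a non-trivial (satellite) knot, and incompressible in $SFS_r$ by Theorem \ref{classification}; hence $S^3_r(C_{p,q}(K))$ is the union of the irreducible manifolds $E(K)$ and $SFS_r$ along an incompressible torus, so it is irreducible and JSJ-theory applies. Moreover $E(K)$ has no torus-bundle JSJ-piece (it has non-empty boundary), and $SFS_r$ is Seifert fibred. By Remark \ref{gluing_torus_is_JSJ_remark}, the only way $T$ could fail to be a JSJ-torus is that $M$ is Seifert fibred with its regular fibre on $T$ glued onto a regular fibre of $SFS_r$. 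If $M$ is hyperbolic this is impossible, so I may assume $M$ is Seifert fibred, and the task reduces to comparing the two regular-fibre slopes on $T$.

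For the slope of $SFS_r$ on $T$: viewing $C_{p,q}(K)$ as a $(p,q)$-curve pushed into $N(K)$, we have $S^3\setminus N(C_{p,q}(K))=E(K)\cup_T CS$ with $CS=\overline{N(K)\setminus N(C_{p,q}(K))}$ the cable space, whose regular fibres have slope $p/q$ on $\partial N(K)=T$ and slope $pq/1$ on $\partial N(C_{p,q}(K))$ by Lemma \ref{cable_space_fiber_slope}; since $\Delta(m/n,pq/1)=\abs{npq-m}\ge2$, the filling slope $r$ is not the fibre slope, so the Seifert fibration of $CS$ extends over the filling torus to that of $SFS_r$, leaving the fibre slope on $T$ equal to $p/q$. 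For the slope of $M$ on $T$: by Theorem \ref{jsj_of_knot_complement}, a Seifert fibred $M$ is either a cable space — whence $K=C_{p',q'}(K')$ and, by Lemma \ref{cable_space_fiber_slope}, the regular fibres of $M$ have the integral slope $p'q'/1$ on $T$ — or a planar-based piece with meridional regular-fibre slope $\infty$ on $T$. Since $\gcd(p,q)=1$ and $\abs{q}\ge2$, the slope $p/q$ is neither integral nor $\infty$, so in both cases it differs from the regular-fibre slope of $M$. Hence the fibres do not match, $T$ is a JSJ-torus of $S^3_r(C_{p,q}(K))$, and the lemma follows.

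I expect the delicate point to be this fibre-slope comparison. One must keep the slope conventions on the common torus $T$ consistent between the two uses of Lemma \ref{cable_space_fiber_slope} (once to $C_{p,q}(K)$ on the surgery side, once to $K=C_{p',q'}(K')$ on the knot-complement side), and, more substantively, one must be sure that ``the regular-fibre slope of $SFS_r$ on $T$'' is unambiguous, i.e.\ that $SFS_r$ admits an essentially unique Seifert fibration. This is automatic except in the degenerate situation where both exceptional multiplicities $\abs{q}$ and $\abs{npq-m}$ equal $2$, in which case $SFS_r$ is the twisted $I$-bundle over the Klein bottle and admits a second, Möbius-band fibration; there one must additionally compute the fibre slope of that second fibration on $T$ and check that it, too, differs from the fibre slope of $M$. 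Apart from this, the argument is bookkeeping with the structure theorems already quoted.
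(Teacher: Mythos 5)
Your argument is correct and follows essentially the same route as the paper: compute the regular-fibre slope $p/q$ of $SFS_r$ on $\partial N(K)$ via Lemma \ref{cable_space_fiber_slope}, compare it with the fibre slope $p'q'/1$ (cable space) or $\infty$ (planar base) of $M$ when $M$ is Seifert fibred, and conclude via Remark \ref{gluing_torus_is_JSJ_remark}, the hyperbolic case being immediate. The only difference is that you explicitly flag the potential non-uniqueness of the Seifert fibration of $SFS_r$ (the twisted $I$-bundle over the Klein bottle when both multiplicities equal $2$), a subtlety the paper's proof does not discuss.
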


\begin{proof}
The proof is similar to Lemma \ref{knot_complement_glue_torus}. First we determine the slope of each regular fiber of $SFS_r$ along the gluing torus $\partial N(K)$.  By the construction of Theorem \ref{classification},  the Seifert fiber space $SFS_r$ is obtained by doing the $r$-slope Dehn filling on the cable space $N(K) \backslash N( C_{p,q} (K)) $ along $\partial N( C_{p,q} (K))$. The Dehn filling does not affect the Seifert fibered structure near $\partial N(K)$. By Lemma \ref{cable_space_fiber_slope}, the  slope on  $\partial N(K)$ is $p/q$.

Suppose $M$ is hyperbolic. Then the gluing torus is JSJ of the surgered manifold by  Remark \ref{gluing_torus_is_JSJ_remark}.

Suppose $M$ is Seifert fibered. We claim that $M\cup_{T^2} SFS_r$ is not Seifert fibered. In other words, we want to show that the regular fibers of the two Seifert fibered pieces have different boundary slopes. In this case, the gluing torus is a JSJ-torus by Remark \ref{gluing_torus_is_JSJ_remark}. First, suppose $M$ is a cable space. As in the proof of Lemma \ref{knot_complement_glue_torus}, each regular fiber of $M$ has  slope $p'q'/1$ on $\partial N(K)$. Then $p'q'/1 \neq p/q$ since $\abs{q} \ge 2$.  Now we suppose $M$ is a planar based Seifert fibered space. Each regular fiber of $M$ has slope $\infty$ on $\partial N(K)$, by Theorem \ref{jsj_of_knot_complement}. We see that $\infty \neq p/q$ since $\abs{q}\ge 2$.

\end{proof}

The following two lemmas deal with the hyperbolic case and the torus knot case respectively. Their proofs are similar to the lemmas above.

\begin{lemma}
\label{hyperbolic_knot_case}
Suppose $K$ is a hyperbolic knot. Then $S^3_{r/q^2}(K)$ is atoroidal, while the gluing torus in $E(K) \cup_{T^2} SFS_r$ is JSJ. 
\end{lemma}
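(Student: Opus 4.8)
The plan is to treat the two relevant cases of Theorem \ref{classification} separately, exactly as in Lemmas \ref{knot_complement_glue_torus} and \ref{knot_complement_glue_SFS}, but to replace the analysis of the JSJ-piece $M$ containing $\partial N(K)$ by the hyperbolicity of the whole exterior $E(K)$. Since $K$ is hyperbolic, $E(K)$ has no JSJ-torus, so the surgered manifold in the second case of Theorem \ref{classification} is just $S^3_{r/q^2}(K)$, and in the third case it is $E(K)\cup_{T^2} SFS_r$.

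For the statement that $S^3_{r/q^2}(K)$ is atoroidal, I would first rewrite the surgery coefficient. In the second case of Theorem \ref{classification} we have $m = npq \pm 1$, so $r/q^2 = m/(nq^2)$; since $m \equiv \pm 1 \pmod q$ and $\gcd(m,n) = 1$, the fraction $m/(nq^2)$ is already in lowest terms. As $\abs{q}\ge 2$, the denominator satisfies $\abs{nq^2}\ge 4 > 2$. Because $K$ is hyperbolic, Theorem \ref{toroidal_surgery_s3} applies directly and shows that $S^3_{r/q^2}(K)$ contains no essential torus. There are no boundary tori to discuss here, since this is a closed manifold.

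For the statement that the gluing torus $T^2 = \partial N(K)$ in $E(K)\cup_{T^2} SFS_r$ is JSJ, the argument is even simpler than in Lemma \ref{knot_complement_glue_SFS}. The torus $T^2$ is incompressible in $SFS_r$ by Theorem \ref{classification} and in $E(K)$ because $K$ is non-trivial. I would then invoke Remark \ref{gluing_torus_is_JSJ_remark} with $M_1 = E(K)$ and $M_2 = SFS_r$: since $E(K)$ is hyperbolic it has no torus-bundle JSJ-pieces, and the JSJ-piece of $E(K)$ abutting $T^2$ is $E(K)$ itself, which is not Seifert fibered; hence the unique exceptional situation in which $T^2$ could fail to be JSJ --- two Seifert fibered pieces whose regular fibers match across $T^2$ --- cannot arise, and no slope comparison is even needed. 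Thus $T^2$ is a JSJ-torus of $S^3_r(C_{p,q}(K))$. (If one prefers, the fiber slope of $SFS_r$ on $\partial N(K)$ is $p/q$ by Lemma \ref{cable_space_fiber_slope}, exactly as in Lemma \ref{knot_complement_glue_SFS}, but this is not used.)

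I do not expect a genuine obstacle in this lemma: once $K$ is hyperbolic the JSJ bookkeeping collapses, because $E(K)$ has no interior JSJ-tori, and the two assertions reduce to a direct appeal to Theorem \ref{toroidal_surgery_s3} and to Remark \ref{gluing_torus_is_JSJ_remark}. The only points requiring a moment's care are verifying the numerical inequality $\abs{nq^2} > 2$ and the incompressibility of the gluing torus in both pieces, both of which are immediate.
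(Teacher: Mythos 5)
Your proof is correct and follows essentially the same route as the paper: the first assertion is exactly the paper's appeal to Theorem \ref{toroidal_surgery_s3} via the observation that $r/q^2 = m/(nq^2)$ is in lowest terms with $\abs{nq^2} > 2$, and the second is the paper's appeal to Remark \ref{gluing_torus_is_JSJ_remark}. You simply spell out the details (lowest-terms check, incompressibility of the gluing torus in both pieces) that the paper leaves implicit.
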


\begin{proof}
The proof of the first statement is similar to the first part of Lemma \ref{knot_complement_glue_torus}. We use theorem \ref{toroidal_surgery_s3} to conclude that $S^3_{r/q^2}( C_{p,q}(K))$ can not be toroidal since $\abs{nq^2} > 2$. The second statement follows from  Remark \ref{gluing_torus_is_JSJ_remark}.

\end{proof}

\begin{lemma}
\label{torus_knot_case}
Suppose $K$ is a torus knot. Then $S^3_{r/q^2}(K)$ is Seifert fibered, while the gluing torus in $E(K) \cup_{T^2} SFS_r$ is JSJ. 
\end{lemma}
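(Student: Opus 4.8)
The plan is to run, for $K$ a nontrivial torus knot, the very same slope comparisons used in Lemmas~\ref{knot_complement_glue_torus} and~\ref{knot_complement_glue_SFS}, with $E(K)$ itself playing the role of ``the JSJ-piece of $E(K)$ containing $\partial N(K)$''. Write $K = T_{a,b}$ with $a,b$ coprime and $\abs{a},\abs{b}\ge 2$; then $E(K)$ is a single Seifert fibered JSJ-piece, fibered over a disk with two exceptional fibers of orders $\abs{a}$ and $\abs{b}$, the torus $\partial N(K)$ is incompressible in it, and the regular fiber has an \emph{integer} slope on $\partial N(K)$ (concretely $\pm ab$, since two parallel copies of $T_{a,b}$ on the Heegaard torus have linking number $\pm ab$). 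These are the only properties of $E(K)$ I will use, and they are the torus-knot analogues of the hyperbolicity input in Lemma~\ref{hyperbolic_knot_case}.

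For the first assertion, $S^3_{r}(C_{p,q}(K))$ falls into the second case of Theorem~\ref{classification}, so it equals $S^3_{r/q^2}(K)$, which is the Dehn filling of $E(K)$ along the slope $r/q^2 = (npq\pm1)/(nq^2)$. First I would check that this fraction is already in lowest terms: $\gcd(m,n)=1$ by hypothesis and $npq\pm1\equiv\pm1\pmod q$, so the numerator is coprime to both $n$ and $q^2$; hence the denominator $nq^2$ has absolute value at least $4$, and in particular $r/q^2$ is not an integer. Therefore $r/q^2$ is not the (integral) regular fiber slope of $E(K)$, the Seifert fibration of $E(K)$ extends over the filling solid torus, and $S^3_{r/q^2}(K)$ is Seifert fibered (possibly a lens space or $S^3$).

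For the second assertion, $S^3_{r}(C_{p,q}(K))$ falls into the third case of Theorem~\ref{classification}, and I would copy the argument of Lemma~\ref{knot_complement_glue_SFS} essentially verbatim: $SFS_r$ is obtained from the cable space $\overline{N(K)\backslash N(C_{p,q}(K))}$ by Dehn filling its inner boundary $\partial N(C_{p,q}(K))$, an operation that does not disturb the Seifert structure near $\partial N(K)$, so by Lemma~\ref{cable_space_fiber_slope} the regular fiber of $SFS_r$ has slope $p/q$ on $\partial N(K)$. Since $\abs{q}\ge 2$ and $\gcd(p,q)=1$, the slope $p/q$ is not an integer, hence differs from the integral regular fiber slope of $E(K)$; therefore the two Seifert fibrations cannot be amalgamated along $\partial N(K)$. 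As $\partial N(K)$ is incompressible in each of the two pieces and $E(K)$ has no torus-bundle JSJ-piece, Remark~\ref{gluing_torus_is_JSJ_remark} then shows the gluing torus is a JSJ-torus of $E(K)\cup_{T^2}SFS_r = S^3_{r}(C_{p,q}(K))$.

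I do not anticipate a genuine obstacle: the proof is structurally identical to the hyperbolic case. The only points needing care are the bookkeeping verification that $r/q^2$ is in lowest terms (so that non-integrality is legitimate) and fixing the orientation conventions under which the regular fiber slope of a nontrivial torus knot complement is the integer $\pm ab$ on $\partial N(K)$. Finally, the degenerate case $K=$ unknot is not at issue here: then $C_{p,q}(K)$ is itself a torus knot, which has $\tau\neq 0$ and is therefore already excluded by Theorem~\ref{ni-wu}.
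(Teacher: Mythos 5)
Your proof is correct and follows essentially the same route as the paper: the second assertion is handled exactly as in Lemma \ref{knot_complement_glue_SFS}, comparing the regular fiber slope $p/q$ of $SFS_r$ on $\partial N(K)$ against the integral fiber slope $\pm ab$ of $E(T_{a,b})$ and invoking Remark \ref{gluing_torus_is_JSJ_remark}. The only difference is that for the first assertion the paper simply cites Moser's classification of torus knot surgeries, whereas you reprove the relevant fact directly by checking that the reduced filling slope $(npq\pm1)/nq^2$ is non-integral and hence distinct from the fiber slope, so the Seifert fibration extends.
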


\begin{proof}
The Dehn surgeries of torus knots are well-understood, see \cite{moser1971elementary}. In particular, $S^3_{r/q^2}( K)$ is a lens space or a Seifert fibered space with three singular fibers over $S^2$. The proof of the second statement is similar to Lemma \ref{knot_complement_glue_SFS}.

\end{proof}

\section{Proof of the main theorem}


We prove Theorem \ref{main} in this section. We compare $S^3_r(C_{p,q}{(K)})$ and $S^3_{-r}(C_{p,q}{(K)})$ case by case based on the classification of Theorem \ref{classification}. When $K$ is the unknot, we have the following well-known result.

\begin{theorem}
\label{cosmetic_torus_knot}
Torus knots  do not admit purely cosmetic surgeries.

\end{theorem}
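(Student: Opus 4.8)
The plan is to deduce this immediately from Theorem~\ref{ni-wu} together with the known value of the concordance invariant $\tau$ on torus knots, so that essentially no new work is required. Suppose for contradiction that some non-trivial torus knot $T_{a,b}$, with $\abs{a},\abs{b}\ge 2$, admits purely cosmetic surgeries, say $S^3_{r}(T_{a,b})\cong S^3_{s}(T_{a,b})$ with $r\ne s$. Applying Theorem~\ref{ni-wu} to the non-trivial knot $T_{a,b}$, the third bullet forces $\tau(T_{a,b})=0$.

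However, it is classical that a non-trivial torus knot satisfies $\tau(T_{a,b})=\pm(\abs{a}-1)(\abs{b}-1)/2$: torus knots are L-space knots (equivalently, positive braid closures up to mirroring), so $\abs{\tau}$ equals the Seifert genus (see \cite{ozsvath2003knot}, \cite{MR2704683}). Since $\abs{a},\abs{b}\ge 2$ this value is non-zero, contradicting $\tau(T_{a,b})=0$. Hence no non-trivial torus knot admits purely cosmetic surgeries; the unknot is excluded from the statement since its complement is a solid torus. In the proof of Theorem~\ref{main} this settles the base case $K=$ unknot, where $C_{p,q}(K)=T_{q,p}$ is a torus knot.

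There is essentially no obstacle here once Theorem~\ref{ni-wu} is available; the only point requiring any care is orientation bookkeeping for $T_{a,b}$ versus its mirror, but $\abs{\tau}=g>0$ in every case, so the vanishing $\tau=0$ is impossible regardless of sign conventions. If one preferred an argument independent of the Heegaard--Floer $\tau$ invariant, one could instead appeal to Moser's classification \cite{moser1971elementary}: every Dehn surgery on a torus knot produces a lens space, a connected sum of two lens spaces, or a Seifert fibered space over $S^2$ with three exceptional fibers, and one then distinguishes the candidates for slopes $r$ and $-r$ using an invariant such as $\abs{H_1}$ or the Casson--Walker invariant. This alternative is longer and is not needed, so I would present only the short $\tau$-argument above.
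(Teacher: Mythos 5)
Your argument is correct, but it takes a different route from the paper. The paper invokes the Boyer--Lines obstruction (a knot with purely cosmetic surgeries must have $\Delta''_K(1)=0$) and finishes with the classical computation $\Delta''_{T_{p,q}}(1)=\tfrac{(p^2-1)(q^2-1)}{12}\neq 0$ for non-trivial torus knots; you instead invoke the third bullet of Theorem~\ref{ni-wu} ($\tau(K)=0$) and the fact that $\abs{\tau(T_{a,b})}=g(T_{a,b})=(\abs{a}-1)(\abs{b}-1)/2>0$ since torus knots are L-space knots (positive braid closures up to mirroring). Both obstructions are already quoted in the introduction, so both proofs are one-liners of essentially equal length; the paper's version has the mild advantage of resting on the older and more elementary Alexander-polynomial criterion rather than on the Heegaard Floer concordance invariant, while yours has the advantage of reusing the theorem (Ni--Wu) that the rest of the paper already depends on. Your handling of the unknot and of mirror/orientation conventions is fine, and your remark that one could alternatively argue via Moser's classification correctly anticipates the style of argument the paper uses for satellite cables. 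No gap.
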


\begin{proof}
 Let $K$ be a torus knot. We only need to show that $\Delta''_K(1) \neq 0.$  (See \cite{boyer1990surgery} \cite{ni2015cosmetic}, \cite{ichihara2016note}.) For non-trivial torus knots, we have  $\Delta_{T_{p,q}}''(1) = \frac{(p^2 - 1)(q^2 - 1)}{12} \neq 0.$
\end{proof}

We need the following theorem to deal with the first case of Theorem \ref{classification}. 

\begin{theorem}[\cite{scharlemann1990producing}]
\label{cable_conjecture}
Let $K$ be a satellite knot. If $S^3_{r}(K)$ contains an essential sphere, then $K$ is a $(p,q)$-cable and $r = pq$.
\end{theorem}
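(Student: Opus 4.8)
The plan is to exploit the satellite structure of $K$ to write $S^3_r(K)$ as the union of two pieces along the companion torus, and then to reduce the existence of an essential sphere to a statement about surgery on a knot inside a solid torus, where the answer is governed by known classification results.

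First I would set up notation. Since $K$ is a satellite knot, $E(K)$ contains an essential torus $T$, which bounds a solid torus $V\subset S^3$; one may choose $V$ so that $K\subset\operatorname{int}V$, so that $K$ is not contained in a $3$-ball of $V$, and so that $K$ is not a core of $V$ (otherwise $T$ would be compressible in $E(K)$). Write $X=S^3\setminus\operatorname{int}V$, the exterior of the companion knot --- a solid torus when the companion is unknotted, and otherwise an irreducible manifold with incompressible boundary --- and let $V_r(K)$ denote the solid torus $V$ after the $r$-surgery on $K$. Then $S^3_r(K)=X\cup_T V_r(K)$, a union along the incompressible (in $E(K)$) torus $T$.

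Next, suppose $S^3_r(K)$ contains an essential sphere $S$, chosen so that $\abs{S\cap T}$ is minimal. If $S\cap T=\emptyset$, then $S$ lies in $X$ or in $V_r(K)$; since $X$ is irreducible and $S$ cannot bound a ball in $S^3_r(K)$, it must lie in $V_r(K)$, so $V_r(K)$ is reducible. If $S\cap T\neq\emptyset$, an innermost-disk argument applies: curves of $S\cap T$ that are inessential on $T$ can be removed using irreducibility of the two sides, so an innermost disk of $S$ cut off by $T$ is a compressing disk for $T$ lying in $X$ or in $V_r(K)$. If it lies in $V_r(K)$, then $V_r(K)$ is $\partial$-reducible. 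If it lies in $X$, then the companion knot is unknotted and $T$ is a standardly embedded torus; one then re-examines the reassembled sphere directly and finds that the surgery inside $V$ again produces a reducible or $\partial$-reducible manifold (or that $K$ has winding number zero and the connected-sum decomposition is already visible inside $V_r(K)$). In every case, the $r$-surgery on $K$ viewed inside the solid torus $V$ yields a reducible or $\partial$-reducible manifold. The heart of the argument is then the classification of such surgeries: since $K\subset V$ is neither contained in a ball nor a core, Gabai's theorem on surgery on knots in solid tori shows that $V_r(K)$ cannot be reducible, and that if $V_r(K)$ is $\partial$-reducible then $K$ is a $(p,q)$-torus knot on a torus parallel to $\partial V$ and $r$ is the corresponding surface slope $pq$. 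Translating back to $S^3$, this says precisely that $K$ is the cable knot $C_{p,q}(K')$ and $r=pq$, and one checks that $\abs{q}\ge 2$ makes the resulting sphere genuinely essential, as required.

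The main obstacle is the classification of solid-torus surgeries invoked in the final step: ruling out reducible fillings and pinning down $\partial$-reducible fillings as exactly the cabling case is a deep theorem proved via sutured manifold hierarchies and thin position, and nothing elementary replaces it. A secondary technical point is the branch of the innermost-disk argument in which the companion is unknotted, so that $X$ is itself a solid torus with compressible boundary; there the reducing sphere can meet $T$ essentially without being localized to one side, and one must use the minimality of $\abs{S\cap T}$ together with the winding number of $K$ in $V$ to recover a reducing or $\partial$-reducing configuration for the surgery inside $V$.
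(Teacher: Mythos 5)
The paper does not actually prove this statement; it is imported wholesale from Scharlemann \cite{scharlemann1990producing}, so the only comparison available is between your sketch and the argument in the literature. Your reduction is the standard one and is essentially sound up to the last step: write $S^3_r(K)=X\cup_T V_r(K)$ along the companion torus, observe that $T$ is incompressible in the companion exterior $X$ (so the companion is automatically knotted, and your ``unknotted companion'' branch is vacuous --- an innermost compressing disk for $T$ can never lie in $X$), and conclude that $V_r(K)$ is reducible or $\partial$-reducible.

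The genuine gap is in your appeal to Gabai. You assert that his theorem ``shows that $V_r(K)$ cannot be reducible, and that if $V_r(K)$ is $\partial$-reducible then $K$ is a $(p,q)$-torus knot on a torus parallel to $\partial V$ and $r=pq$.'' Both clauses are wrong, and the first contradicts the very conclusion you are after: when $K=C_{p,q}(K')$ and $r=pq$, the lens space summand $L(q,p)$ of $S^3_{pq}(K)$ sits inside $V_{pq}(K)$, so $V_r(K)$ \emph{is} reducible in exactly the case the theorem is meant to detect. What Gabai actually proves is: (i) if $V_r(K)$ is reducible, then $K$ is cabled in $V$ and $r$ is the cabling slope --- this is the branch that delivers the theorem; and (ii) if $V_r(K)$ is $\partial$-reducible but irreducible, then it is a solid torus and $K$ is a $0$- or $1$-bridge braid in $V$; since $1$-bridge braids are not cables, $\partial$-reducibility alone does not force the cabling conclusion. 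Branch (ii) is the hard residual case your sketch silently drops: there $S^3_r(K)$ becomes a Dehn filling of $X$, i.e.\ a surgery on the companion knot, and ruling out a reducible outcome is not free --- one needs Gabai's determination of which slopes yield solid tori together with the Gordon--Luecke theorem that a reducible surgery on a knot in $S^3$ must have integral slope (or, in Scharlemann's own treatment, his combinatorial main theorem). Until branch (ii) is closed, the argument is incomplete.
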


By this theorem, we see that only the first case of Theorem \ref{classification} involves reducible manifolds. Now we compare $S^3_r(C_{p,q}(K))$ and $S^3_{-r}(C_{p,q}(K))$ in the following three lemmas, which complete the proof of our main theorem. We use the notations from Theorem \ref{classification}.

\begin{lemma}
\label{case_1_remove}

 If $S^3_{r}(C_{p,q}(K))$ is of the form $S^3_{p/q}(K) \# L(q,p)$, then  $S^3_{r}(C_{p,q}(K)) \not\cong S^3_{-r}(C_{p,q}(K))$.
 
\end{lemma}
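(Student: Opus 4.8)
The plan is to use Theorem \ref{cable_conjecture} to rule out that $S^3_{-r}(C_{p,q}(K))$ is reducible, so that a homeomorphism between the two surgered manifolds is impossible on the level of prime decompositions. Concretely, by hypothesis $r = pq$, so $S^3_{r}(C_{p,q}(K)) = S^3_{p/q}(K) \# L(q,p)$, which is a connected sum in which the lens-space summand $L(q,p)$ is nontrivial because $\abs{q} \ge 2$; in particular $S^3_{r}(C_{p,q}(K))$ is reducible. First I would observe that $-r = -pq = p(-q) = (-p)q$, and ask whether $-r$ is of the form $p'q'$ for the cable parameters realizing $C_{p,q}(K)$ as a cable knot. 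The slope $pq$ is intrinsic to the cable structure (it is the boundary slope of the cabling annulus), so the only way $S^3_{-r}(C_{p,q}(K))$ could be reducible, by Theorem \ref{cable_conjecture} applied to the satellite knot $C_{p,q}(K)$, is if $-pq = pq$, i.e. $pq = 0$, which is impossible since $\abs{q}\ge 2$ and $p \neq 0$. Hence $S^3_{-r}(C_{p,q}(K))$ is irreducible.

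Next I would conclude: a reducible $3$-manifold and an irreducible one cannot be homeomorphic, so $S^3_{r}(C_{p,q}(K)) \not\cong S^3_{-r}(C_{p,q}(K))$, and since by Theorem \ref{ni-wu} any purely cosmetic pair must be $\{r,-r\}$, this settles this case. A small point to handle carefully: Theorem \ref{cable_conjecture} is stated for satellite knots, so I should note that $C_{p,q}(K)$ is genuinely a satellite knot — this is where the hypothesis $\abs{q}\ge 2$ enters again (a $(p,\pm1)$-cable is just $K$ itself and need not be a satellite), together with the fact that $K$ may be the unknot, in which case $C_{p,q}(K)$ is the torus knot $T_{q,p}$, which is not a satellite. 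So for $K$ the unknot the argument must instead invoke Theorem \ref{cosmetic_torus_knot} directly; I would dispose of that subcase first and then assume $K$ nontrivial, so that $C_{p,q}(K)$ is a satellite and Theorem \ref{cable_conjecture} applies.

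The main obstacle, such as it is, is purely bookkeeping: making sure the sign conventions for cable slopes are consistent, i.e. that $S^3_{-pq}(C_{p,q}(K))$ really does not fall into the first case of Theorem \ref{classification} (equivalently, that $-pq \neq p'q'$ for any presentation of this knot as a cable). Since a knot in $S^3$ that is a nontrivial cable has a unique cabling annulus up to isotopy, its companion solid torus and hence the pair $(p,q)$ up to the sign ambiguity $(p,q)\sim(-p,-q)$ is determined, and the cabling slope $pq$ is therefore a well-defined invariant; so $-pq$ is the cabling slope only when $pq=0$. That observation is the crux, and everything else is immediate from Theorem \ref{cable_conjecture} and the fact that homeomorphic manifolds have the same number of prime summands.
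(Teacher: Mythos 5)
Your proposal is correct and follows essentially the same route as the paper: both arguments observe that $r=pq\neq -r$, invoke Theorem \ref{cable_conjecture} to conclude that $S^3_{-r}(C_{p,q}(K))$ is irreducible, and contrast this with the nontrivial connected sum decomposition of $S^3_{r}(C_{p,q}(K))$. Your extra care about the satellite hypothesis when $K$ is the unknot is handled in the paper at the level of the main theorem (via Theorem \ref{cosmetic_torus_knot}), so the lemma is only ever applied when $C_{p,q}(K)$ is genuinely a satellite.
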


\begin{proof}
 By Theorem \ref{classification}, we have $r = pq \neq -r$, and hence $S^3_{-r}(C_{p,q}(K))$ does not correspond to the first case. However, manifolds appearing in the remaining two cases of Theorem \ref{classification}  are irreducible, by Theorem \ref{cable_conjecture}. 
\end{proof}

\begin{lemma}
\label{case_2-3_remove}
If $S^3_{r}(C_{p,q}(K))$ is of the form $S^3_{r/q^2}(K)$, then $S^3_{-r}(C_{p,q}(K))$ is of the form $E(K) \cup_{T^2} SFS_{-r}$. Furthurmore, we have $S^3_{r}(C_{p,q}(K)) \not\cong S^3_{-r}(C_{p,q}(K))$.
\end{lemma}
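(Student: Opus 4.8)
The plan is to split the argument into two parts following the statement. First I would show that $S^3_{-r}(C_{p,q}(K))$ falls into the third case of Theorem \ref{classification}; then I would use the JSJ-counting idea, i.e. Lemma \ref{diff_sfs_piece} together with the lemmas of Section 3, to conclude $S^3_r(C_{p,q}(K))\not\cong S^3_{-r}(C_{p,q}(K))$. For the first part, write $r=m/n$ with $\gcd(m,n)=1$; the hypothesis that $S^3_r$ is of the form $S^3_{r/q^2}(K)$ means $m=npq\pm 1$ by Theorem \ref{classification}. Then $-r=-m/n$, and I must check that $-m$ is neither $(-n)pq\pm 1$ (the second case for slope $-r$, written with denominator $-n$ or equivalently $n$) nor $-m=npq$ (the first case). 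From $m=npq\pm 1$ we get $-m=-npq\mp 1=(-n)pq\mp 1$, but the second case for slope $-r=-m/n$ requires the numerator to be $npq\pm1$, i.e. $-m\equiv \pm1 \pmod{pq}$ would force $m\equiv \mp 1\pmod{pq}$ as well — so I need to be careful here and instead argue directly from the normal form: $S^3_{-r}$ is in the second case iff $-m=n'pq\pm1$ where $-r=-m/n=(-m)/n$, i.e. $-m=npq\pm 1$; combined with $m=npq\pm1$ this gives $2npq=0$ or $\pm2$, impossible since $\abs{q}\ge2$, $p\ne0$, $n\ne 0$ (and the $npq=\pm1$ subcase contradicts $\abs{q}\ge 2$). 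Likewise $-r=pq$ forces $r=-pq$, so $m=-npq$, contradicting $m=npq\pm1$ unless $2npq=\pm1$, again impossible. Hence $S^3_{-r}(C_{p,q}(K))$ is of the form $E(K)\cup_{T^2}SFS_{-r}$.

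For the second part I would compare JSJ-decompositions. The manifold $S^3_r(C_{p,q}(K))=S^3_{r/q^2}(K)$: by Lemma \ref{knot_complement_glue_torus} (when $K$ is a satellite), Lemma \ref{hyperbolic_knot_case} (when $K$ is hyperbolic), or Lemma \ref{torus_knot_case} (when $K$ is a torus knot), its JSJ-decomposition is obtained from that of $E(K)$ by replacing the JSJ-piece $M\supset\partial N(K)$ with the atoroidal-or-Seifert-fibered Dehn filling $M(r/q^2)$, which carries no new JSJ-torus coming from $\partial N(K)$. On the other hand $S^3_{-r}(C_{p,q}(K))=E(K)\cup_{T^2}SFS_{-r}$: by Lemma \ref{knot_complement_glue_SFS} (satellite case), or Lemmas \ref{hyperbolic_knot_case}/\ref{torus_knot_case} (hyperbolic or torus knot case), the gluing torus $\partial N(K)$ is a JSJ-torus. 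Thus $S^3_{-r}(C_{p,q}(K))$ has the JSJ-tori of $E(K)$ together with one extra torus (the copy of $\partial N(K)$), so it has strictly more JSJ-tori than $S^3_r(C_{p,q}(K))$. Since an orientation-preserving homeomorphism induces a bijection on JSJ-tori, the two manifolds cannot be homeomorphic.

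I expect the main obstacle to be the bookkeeping in the first part — making sure that $-m/n$ genuinely does not land in the first or second case of Theorem \ref{classification}, which requires handling the sign ambiguities in $m=npq\pm1$ and the non-uniqueness of writing $-r=-m/n$ (one could also write it as $m/(-n)$), and invoking $\abs{q}\ge2$ at each step to kill the exceptional subcases where a singular fiber of $SFS$ would degenerate. A secondary point to be careful about is that in the second part one must separate out the case where $E(K)$ itself has no JSJ-torus (so $K$ is hyperbolic or a torus knot) from the satellite case, but this is exactly what Lemmas \ref{hyperbolic_knot_case} and \ref{torus_knot_case} are set up to handle; I should also note that when $E(K)$ has no JSJ-torus, $S^3_r(C_{p,q}(K))$ has none arising from this construction while $S^3_{-r}(C_{p,q}(K))$ has exactly one, so the counting argument still applies.
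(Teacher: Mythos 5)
Your proposal is correct and follows essentially the same route as the paper: rule out the first and second cases of Theorem \ref{classification} for the slope $-r$ (the paper does this with the one-line observation $m = npq \pm 1 \neq -m$, which your more careful sign bookkeeping simply expands), and then compare the number of JSJ-tori using Lemmas \ref{knot_complement_glue_torus}, \ref{knot_complement_glue_SFS}, \ref{hyperbolic_knot_case}, and \ref{torus_knot_case}. No substantive difference or gap.
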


\begin{proof}
We have $m = npq \pm 1 \neq -m$, hence the first statement follows from Theorem \ref{classification}. Since $S^3_{r}(C_{p,q}(K))$ is of the form $S^3_{r/q^2}(K)$, it has one fewer JSJ-torus than that of $S^3_{-r}(C_{p,q}(K))$, by Lemma \ref{knot_complement_glue_torus}, \ref{knot_complement_glue_SFS}, \ref{hyperbolic_knot_case}, and \ref{torus_knot_case}. Thus we have $S^3_{r}(C_{p,q}(K)) \not\cong S^3_{-r}(C_{p,q}(K))$.
\end{proof}

\begin{lemma}
\label{case_3-3_remove}
If both $S^3_{r}(C_{p,q}(K))$ and $S^3_{-r}(C_{p,q}(K))$ are of form $E(K) \cup_{T^2} SFS_{\pm r}$, then they have different JSJ-pieces and hence are not homeomorphic. 
\end{lemma}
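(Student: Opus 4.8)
The plan is to compare the Seifert fibered pieces $SFS_r$ and $SFS_{-r}$ directly and show they cannot be homeomorphic, then invoke Lemma \ref{diff_sfs_piece}. First I would set up notation: write $r = m/n$ with $\gcd(m,n)=1$, so that $SFS_r$ has singular fibers of multiplicities $\abs{q}$ and $\abs{npq - m}$, while $SFS_{-r}$ has singular fibers of multiplicities $\abs{q}$ and $\abs{-npq + m} = \abs{npq - m}$. Thus the underlying multiplicities agree, and the distinction must come from a finer invariant — the Seifert invariants (the Euler number, or the slopes of the singular fibers). Since both pieces arise as Dehn fillings of the \emph{same} cable space $N(K)\backslash N(C_{p,q}(K))$ along $\partial N(C_{p,q}(K))$, with slopes $r$ and $-r$ respectively, I would track how the filling slope enters the Seifert data. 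By Lemma \ref{cable_space_fiber_slope} the regular fiber slope on $\partial N(C_{p,q}(K))$ is $pq/1$, and $\Delta(r, pq) = \Delta(-r, pq)$ would need to be distinguished; the key point is that the two fillings produce Seifert structures whose invariants differ in sign in an essential way.

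The core of the argument is then to rule out an orientation-preserving homeomorphism $SFS_r \cong SFS_{-r}$. Here I would use the standard classification of small Seifert fibered spaces with incompressible boundary: a Seifert fibration over a disk (or annulus) with the given singular fibers is determined up to orientation-preserving homeomorphism by its base orbifold, the slopes of the exceptional fibers, and the framing on the boundary torus, all taken up to the action of the mapping class group of the base. Since $SFS_r$ has a single boundary torus, is Seifert fibered over a disk with two exceptional fibers (this is the cable space filled once — a fact I would extract from the construction in Theorem \ref{classification} together with Lemma \ref{cable_space_fiber_slope}), and the fibration is unique (the base is small enough that there is no horizontal surface issue), the homeomorphism type is governed by the pair of Seifert invariants modulo the relation that lets one add integers between fibers. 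The comparison with $-r$ flips the Euler number / the boundary framing, and I would show this flip is not absorbed by the residual freedom. Concretely I expect to reduce to an arithmetic statement of the form: a certain rational number built from $p$, $q$, $m$, $n$ is not equal to its own negative modulo the allowed integer shifts, which holds because $\abs{q}\ge 2$ prevents degeneracies and because $r \neq -r$ (we are in the genuinely cosmetic case $r = -s$, $r \neq 0$, with $\infty$ already excluded).

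The alternative, cleaner route — and the one I would actually pursue — is to avoid computing Seifert invariants by instead comparing the \emph{whole} surgered manifolds via their JSJ decompositions and reading off an orientation-sensitive invariant from a filling of $E(K)$. Since $S^3_r(C_{p,q}(K)) = E(K) \cup_{T^2} SFS_r$ and $SFS_r$ has incompressible boundary (stated in Theorem \ref{classification}), and since by Lemmas \ref{knot_complement_glue_SFS}, \ref{hyperbolic_knot_case}, \ref{torus_knot_case} the gluing torus $\partial N(K)$ is JSJ, the JSJ-pieces of $S^3_r(C_{p,q}(K))$ are those of $E(K)$ together with $SFS_r$. An orientation-preserving homeomorphism $S^3_r \cong S^3_{-r}$ would match JSJ-pieces; since $E(K)$ has finitely many pieces and $SFS_r$, $SFS_{-r}$ have the same fiber multiplicities but (generically) are not themselves JSJ-pieces of $E(K)$ (a cable space or a piece with the surgery-dependent framing does not sit inside the \emph{un}-surgered $E(K)$), Lemma \ref{diff_sfs_piece} applies once I confirm $SFS_r \not\cong SFS_{-r}$. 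So in all routes the crux is the same.

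The main obstacle, then, is exactly the lemma-internal claim $SFS_r \not\cong SFS_{-r}$ as oriented manifolds. I expect this to be the delicate part because the two spaces genuinely share their base orbifold and singular multiplicities, so one must invoke the \emph{oriented} classification of Seifert fibered spaces and carefully verify that the mapping class group of the base disk together with fiber-preserving isotopies does not identify the two Seifert invariant vectors. I would handle it by writing each $SFS_{\pm r}$ as a cable space $C$ Dehn-filled along $\partial N(C_{p,q}(K))$ with slope $\pm r$, observing that an orientation-preserving homeomorphism of the filled manifolds restricting to the (essentially unique, up to isotopy and the hyperelliptic-type symmetries) Seifert structure must carry the filling slope $r$ to $\pm r$ on $C$ in a way compatible with $C$'s own symmetries; since $\Delta(r,-r) = \Delta(r, pq)\cdot(\text{something nonzero})$ and $r \neq -r$, the only symmetry of the cable space that could do this is ruled out by the multiplicity $\abs{q}\ge 2$ of its singular fiber (a cable space admits no orientation-preserving self-homeomorphism reversing the sign of the boundary framing relative to the fiber). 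This last sentence is where the real work lies, and I would either cite an existing classification of cable space homeomorphisms or give a short direct argument using the fact that such a self-homeomorphism would have to fix the singular fiber and act on the base annulus, forcing it to preserve rather than negate the relevant slope.
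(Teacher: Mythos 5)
There is a genuine error at the very first step, and it sends you down a much harder (and ultimately uncompleted) path. You compute the singular fiber multiplicity of $SFS_{-r}$ as $\abs{-npq+m}=\abs{npq-m}$ and conclude that the two pieces have the same multiplicities. This is wrong. Writing $r=m/n$, the slope $-r$ is represented by the coprime pair $(-m,n)$ (or equivalently $(m,-n)$), so the formula of Theorem \ref{classification} gives multiplicity $\abs{npq-(-m)}=\abs{npq+m}$ for $SFS_{-r}$, not $\abs{npq-m}$. (Your substitution negates both $m$ and $n$ simultaneously, which represents the \emph{same} slope $r$ again.) One can also see this directly: the new singular fiber created by filling the cable space along $\partial N(C_{p,q}(K))$ has multiplicity $\Delta(\pm r,\, pq/1)=\abs{\pm m - npq}$, and these two numbers differ. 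Since $\abs{npq-m}=\abs{npq+m}$ forces $mnpq=0$, and $m,n,p,q$ are all nonzero here ($r\neq 0,\infty$ and $\abs{q}\ge 2$ forces $p\neq 0$), the unordered sets of singular fiber multiplicities of $SFS_r$ and $SFS_{-r}$ are already different. That single observation, combined with Lemma \ref{diff_sfs_piece} and the JSJ statements you correctly invoke, is the paper's entire proof; no Seifert invariants, Euler numbers, or oriented classification are needed.

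Because of the arithmetic slip, everything after your first paragraph is aimed at a problem that does not arise. Moreover, even taken on its own terms that alternative route is not complete: the decisive claim (that a cable space admits no self-homeomorphism negating the boundary framing relative to the fiber, so that the filling slopes $r$ and $-r$ cannot be interchanged) is exactly where you say ``the real work lies'' and is left as a sketch with a citation to be found. So the proposal as written neither reproduces the paper's one-line argument nor supplies a self-contained substitute. The fix is simple: redo the multiplicity computation for $-r$ correctly and the lemma follows immediately.
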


\begin{proof}
By Lemma \ref{knot_complement_glue_torus}, \ref{knot_complement_glue_SFS}, \ref{hyperbolic_knot_case}, and \ref{torus_knot_case}, we know that these two manifolds have the same number of JSJ-tori, and the gluing torus $\partial N(K)$ is JSJ in each manifold. By Lemma \ref{diff_sfs_piece}, we only need to show that $SFS_r \not\cong SFS_{-r}$. Note that the Seifert fibered piece $SFS_r$ contains a singular fiber of multiplicity $\abs{npq -m}$, while $SFS_{-r}$ contains a singular fiber of multiplicity $\abs{npq +m}$. However, we have $\abs{npq -m} \neq \abs{npq +m}$ since $m$, $n$, $p$, and $q$ are all non-zero. Thus $SFS_r \not\cong SFS_{-r}$ and the statement is proved.
\end{proof}

\begin{theorem}

Let $C_{p,q}(K)$ be a cable knot. Suppose there exists an orientation-perserving homeomorphism $f: S^3_{r}(C_{p,q}(K)) \to S^3_{s}(C_{p,q}(K)).$ Then $r = s$.

\end{theorem}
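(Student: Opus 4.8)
The plan is to combine Theorem~\ref{ni-wu} with the case analysis already prepared in Lemmas~\ref{case_1_remove}, \ref{case_2-3_remove}, and \ref{case_3-3_remove}, together with the classical result that torus knots admit no purely cosmetic surgeries (Theorem~\ref{cosmetic_torus_knot}). First I would reduce to the hardest situation. If $r = s$ there is nothing to prove, so assume $r \neq s$ and that $f$ is orientation-preserving; by Theorem~\ref{ni-wu} applied to the knot $C_{p,q}(K)$ we may assume $s = -r$, write $r = m/n$ with $\gcd(m,n) = 1$, and note that $m, n \neq 0$ since otherwise $r \in \{0, \infty\}$ and the $\infty$-surgery case is excluded by the knot-complement theorem (or, for $r=0$, by $-r = r$). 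We may also assume $|q| \ge 2$, which is part of the hypothesis that $C_{p,q}(K)$ is a cable knot. Throughout we apply Theorem~\ref{classification} to both $S^3_{r}(C_{p,q}(K))$ and $S^3_{-r}(C_{p,q}(K))$.

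Next I would split according to which of the three cases of Theorem~\ref{classification} the slope $r$ falls into, keeping in mind that $-r$ need not land in the same case. If $r = pq$ (first case), then $-r = -pq \neq pq$ cannot be $pq$, so $S^3_{-r}(C_{p,q}(K))$ lies in case two or three of Theorem~\ref{classification}; by Theorem~\ref{cable_conjecture} those manifolds are irreducible while $S^3_{pq}(C_{p,q}(K)) = S^3_{p/q}(K) \# L(q,p)$ is reducible (here $L(q,p)$ is a genuine lens space since $|q|\ge 2$), a contradiction — this is exactly Lemma~\ref{case_1_remove}. If $m = npq \pm 1$ (second case) then $-m = npq \mp 1$ is not of the form $-npq \pm 1$ unless $npq = 0$, which is impossible, so $-r$ lies in the third case; then $S^3_{r}(C_{p,q}(K)) = S^3_{r/q^2}(K)$ while $S^3_{-r}(C_{p,q}(K)) = E(K) \cup_{T^2} SFS_{-r}$, and Lemmas~\ref{knot_complement_glue_torus}, \ref{knot_complement_glue_SFS}, \ref{hyperbolic_knot_case}, \ref{torus_knot_case} show the latter has strictly one more JSJ-torus than the former (after checking that $K$ is a torus knot, a hyperbolic knot, or a satellite knot, exhausting all cases, with the unknot case handled separately by Theorem~\ref{cosmetic_torus_knot} applied to the cable of the unknot, which is a torus knot) — this is Lemma~\ref{case_2-3_remove}. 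Finally, if $r$ falls in the third case, then so does $-r$ (again because $\pm m \neq \pm npq + 1$), and Lemma~\ref{case_3-3_remove} shows the Seifert fibered pieces $SFS_r$ and $SFS_{-r}$ have singular fibers of multiplicities $|npq - m|$ and $|npq + m|$ respectively, which differ because $m, n, p, q$ are all nonzero; then Lemma~\ref{diff_sfs_piece} forbids an orientation-preserving homeomorphism.

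The main obstacle, and the part requiring the most care, is verifying in each case that the gluing torus $\partial N(K)$ really is a JSJ-torus of the surgered manifold, so that a homeomorphism must respect the decomposition; this is precisely what Lemmas~\ref{knot_complement_glue_torus} through \ref{torus_knot_case} accomplish, and it rests on the slope-comparison arguments (the regular-fiber slope of a cable space is $p'q'/1$ by Lemma~\ref{cable_space_fiber_slope}, the fiber slope of a planar-based piece is $\infty$, and neither equals $m/nq^2$ or $p/q$ when $|q| \ge 2$) together with the toroidal-surgery bounds of Gordon--Luecke (Theorems~\ref{toroidal_surgery_s3}, \ref{toroidal_surgery}, via Corollary~\ref{toroidal_surgery_not_solid_torus}) which guarantee that large-denominator fillings stay atoroidal. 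A secondary subtlety is the bookkeeping that the three cases of Theorem~\ref{classification} are genuinely exhaustive and that $-r$ is correctly placed relative to $r$; once those are in hand, assembling Lemmas~\ref{case_1_remove}, \ref{case_2-3_remove}, \ref{case_3-3_remove} yields $S^3_r(C_{p,q}(K)) \not\cong S^3_{-r}(C_{p,q}(K))$ in every case, contradicting the existence of $f$, so $r = s$ as claimed.
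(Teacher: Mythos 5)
Your proposal is correct and follows essentially the same route as the paper: reduce to $s=-r$ via Theorem~\ref{ni-wu}, handle the unknot case by Theorem~\ref{cosmetic_torus_knot}, and otherwise run the case analysis of Theorem~\ref{classification} through Lemmas~\ref{case_1_remove}, \ref{case_2-3_remove}, and \ref{case_3-3_remove}. The only minor imprecision is that excluding $-r$ from the second case requires $|npq|\le 1$ (not just $npq=0$) to fail, but this is still ruled out since $n\neq 0$, $p\neq 0$, and $|q|\ge 2$.
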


\begin{proof}
The statement follows from Theorem \ref{cosmetic_torus_knot} if $K$ is the unknot. The remaining cases follow from Lemma \ref{case_1_remove}, \ref{case_2-3_remove},  \ref{case_3-3_remove}, based on Theorem \ref{classification}.
\end{proof}

\bibliographystyle{halpha_1}
\bibliography{cable_knot}

\end{document}